\definecolor{darkred}{rgb}{1,0,0} 
\definecolor{darkgreen}{rgb}{0,0.8,0}
\definecolor{darkblue}{rgb}{0,0,1}
\numberwithin{equation}{section}
\theoremstyle{plain}
\theoremstyle{plain}
\newtheorem{theorem}{Theorem}
\numberwithin{theorem}{section}
\newtheorem{proposition}[theorem]{Proposition}
\newtheorem{lemma}[theorem]{Lemma}
\newtheorem{corollary}[theorem]{Corollary}
\theoremstyle{definition}
\newtheorem{examplex}[theorem]{Example}
\theoremstyle{definition}
\newtheorem{remark}[theorem]{Remark}
\newtheorem{definition}[theorem]{Definition}
\newtheorem{notation}[theorem]{Notation}
\newcommand{\supp}{\mathrm{supp}}
\newcommand{\interior}[1]{%
  {\kern0pt#1}^{\mathrm{o}}%
}
\newtheoremstyle{named}{}{}{\itshape}{}{\bfseries}{.}{.5em}{\thmnote{#3 }#1}
\theoremstyle{named}
\newtheorem*{namedtheorem}{Theorem}
\DeclareMathOperator{\ord}{ord}
\newcommand{\R}{\mathbb{R}}
\newcommand{\C}{\mathbb{C}}
\title[Non-solvability in the flat category of elliptic operators]{Non-solvability in the flat category of elliptic operators with real analytic coefficients}
\author{Martino Fassina}
\address{
Fakult\"at f\"ur Mathematik, Universit\"at Wien, Oskar-Morgenstern-Platz 1, 1090 Wien, Austria}
\email{martino.fassina@univie.ac.at}
\author{Yifei Pan}
\address{Department of Mathematical Sciences, Purdue University Fort Wayne, 2101 East Coliseum Boulevard,
Fort Wayne, IN 46805, USA}
\email{pan@pfw.edu}
\begin{document}

\subjclass[2010]{Primary 35J99. Secondary 32W99.}
\keywords{Elliptic operators, flat functions.}
\maketitle

\begin{abstract}
Let $\Omega\subset \mathbb{R}^n, n\geq 2$, be an open set. For an elliptic differential operator $L$ on $\Omega$ with real analytic coefficients and a point $p\in\Omega$, we construct a smooth function $g$ with the following properties: $g$ is flat at $p$ and the equation $Lu=g$ has no smooth local solution $u$ that is flat at $p$.
\end{abstract} 

\begin{otherlanguage}{french} 
\begin{abstract}

Soit $ \Omega \subset \mathbb{R}^n, n\geq 2 $, un ensemble ouvert. Pour un op\'erateur diff\'erentiel elliptique $L$ sur $\Omega $ avec des coefficients analytiques r\'eels et un point $ p \in \Omega $, nous construisons une fonction lisse $g$ avec les propri\'et\'es suivantes: $g$ est plat en $p$ et l'\'equation $Lu=g$ n'a pas de solution locale lisse $u$ qui est plate en $p$.
\end{abstract}
\end{otherlanguage}


\section{Introduction}
Let $f$ be a smooth complex-valued function defined on an open set $\Omega\subseteq\R^n$. We say that $f$ is {\em flat} at a point $p\in\Omega$ if its $k$-jet vanishes at $p$ for all $k\in\mathbb{N}$. Functions with this property are ubiquitous in mathematics. For instance, any smooth function with compact support is flat at every point of the boundary of the support (Lemma \ref{simple}). Flat functions also play a role in the theory of PDEs,
 particularly in the study of the unique continuation property. In that context, the following question arises naturally: given a differential operator $L$ and the germ of a flat function $f$ at a point $p$, 
is there always a local solution $u$ to $Lu=f$ that is also flat at $p$? In this paper we show that the answer is negative for every elliptic operator $L$ with real analytic coefficients defined on an open set of $\mathbb{R}^n$, where $n\geq 2$. Here is our main result (see Theorem \ref{nonsolv}).
\begin{namedtheorem}[Main]\label{Theorem1}
Let $p\in\R^n$, $n\geq 2$, and let $L$ be an elliptic differential operator with real analytic coefficients defined on an open neighborhood $\Omega$ of $p$ in $\R^n$. There exists a germ of a flat function $g$ at $p$ with the property that there is no function $u$ flat at $p$ solving $Lu=g$. 
\end{namedtheorem}
Solvability here is meant in the sense of germs at $p$. The main theorem can also be restated in the following way: there exists a smooth germ $g$ vanishing to infinite order at $p$ such that every smooth local solution to $Lu=g$ vanishes to finite order at $p$. Note that, since $L$ is elliptic, it is well known that smooth local solutions always exist. 

We stress that this ``obstruction to solvability" on flat germs only occurs in dimension 2 or higher: in dimension $1$ local flat solutions for flat data always exist (Remark \ref{case1}). 

In a unpublished note \cite{CLPZ15}, the second author thogether with Zhihua Chen, Yang Liu, and Yuan Zhang, proved the same result for the special case of the Cauchy-Riemann operator $\bar\partial$ in $\C^n$. Their approach was based on an explicit integral formula for higher derivatives of solutions to the inhomogeneous Cauchy-Riemann equations. The methods of this paper, on the other hand, are totally different, and based on H\"ormander's treatment of the general theory of elliptic operators \cite{H}.

We believe that the phenomenon here presented will help shed light on several aspects of complex analysis. In particular, it is well understood that flat functions are related to the study of unique continuation \cite{C39, CP, Pan92, Pr, PW98}. In a future paper, we will show how the result stated above applies to the investigation of unique continuation for the $\bar\partial$ operator as well as to the study of minimal solutions for the $\bar\partial$ equation. Work in progress \cite{FP} has also revealed the critical consequences of this phenomenon on the local geometry of real hypersurfaces of finite type in complex dimension 2. 

The paper is essentially self-contained. Our proof of the main theorem relies on classical results in the theory of linear differential operators, for which we refer to \cite{H}. These results, together with the necessary background definitions, are recalled in Section \ref{Sec2}. There the proof of the main theorem is also carried out. The technical parts of the proof can be found in Sections \ref{theproof} and \ref{proofofProp}. In Sections \ref{complex} and \ref{Sec3} we present some applications of our result that fall within the framework of this paper.

\section{Local obstruction to solvability on flat germs}\label{Sec2}
Let $C^{\infty}(\Omega)$ be the ring of smooth complex-valued functions on an open set $\Omega\subset\mathbb{R}^n$ and $C^{\infty}_p$  
the corresponding ring of germs at a point $p\in\Omega$. Here $n\geq 2$ is a positive integer that will be fixed throughout the paper. Unless otherwise stated, all the functions considered will be complex-valued. Since our interest is local, we will usually take $p$ to be the origin and denote by $C^{\infty}_0$ the corresponding ring of smooth germs. Recall that elements of $C^{\infty}_0$ are smooth functions defined on some open neighborhood of $0$, and two smooth functions $f$ and $g$ coincide in $C^{\infty}_0$ if they agree on a neighborhood of $0$. The ring $C^{\infty}_0$ is local with unique maximal ideal $\mathfrak{m}=\{f\in C_0^{\infty},\, f(0)=0\}.$ Note, for $f\in C^{\infty}_0$, that
\begin{equation}\label{cf} 
f \text{ is flat at $0$} \Longleftrightarrow f\in \bigcap_{k=0}^{\infty}\mathfrak{m}^k.\end{equation} 

\begin{notation}
We denote by $c_f$ the ideal of germs of smooth functions that are flat at $0$. By \eqref{cf} we have
$$c_f=\bigcap_{k=0}^{\infty}\mathfrak{m}^k.$$
\end{notation}

Taking the Taylor series expansion at the origin defines a ring homomorphism 
\begin{equation}\label{maptilda}
\begin{split}
\sim\colon &C_0^{\infty}\longrightarrow \mathbb{C}[[x_1,\dots,x_n]]\\
&\,\,\,\,\,\,\,f\longrightarrow \widetilde{f}
\end{split}
\end{equation}
to the ring of formal power series $\mathbb{C}[[x_1,\dots,x_n]]$. The map in \eqref{maptilda} is clearly not injective: its kernel is precisely the ideal of flat germs $c_f$. In other words, $$f \in c_f\Longleftrightarrow \widetilde{f}=0.$$
It is a classical result of Borel \cite{B95} that the map in \eqref{maptilda} is surjective, that is, every formal power series in $\mathbb{C}[[x_1,\dots,x_n]]$ is the Taylor series at the origin of a smooth function $f$. We refer to \cite{Na85} for a modern proof  relying on the Whitney's Extension Theorem. At the risk of some redundancy, we state Borel's result in a separate lemma for future use.
\begin{lemma}(Borel)\label{Borel}
Let $h\in \mathbb{C}[[x_1,\dots,x_n]]$ be a formal power series. Then there exists a germ $f\in C^{\infty}_0$ whose Taylor expansion $\widetilde{f}$ at $0$ is such that
$\widetilde{f}=h$.
\end{lemma}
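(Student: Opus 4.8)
The plan is to give the classical cut-off construction. Fix once and for all a function $\chi\in C^\infty_0$ with $\chi\equiv 1$ on a neighborhood of $0$ and $\supp\chi\subset\{|x|\le 1\}$ (such $\chi$ exists by standard mollifier constructions). Write the given series by grouping homogeneous parts, $h=\sum_{k=0}^\infty P_k$, where $P_k=\sum_{|\alpha|=k}c_\alpha x^\alpha$ is the homogeneous polynomial of degree $k$ collecting the degree-$k$ terms of $h$. For a parameter $\lambda>0$ consider the smooth, compactly supported function $x\mapsto P_k(x)\,\chi(\lambda x)$, which is supported in $\{|x|\le 1/\lambda\}$.

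The key step is the elementary estimate: for every multi-index $\beta$ with $|\beta|<k$ there is a constant $C_{k,\beta}$ (independent of $\lambda\ge 1$) such that
\begin{equation*}
\sup_{x}\big|\partial^\beta\big(P_k(x)\chi(\lambda x)\big)\big|\le \frac{C_{k,\beta}}{\lambda}.
\end{equation*}
This follows from the Leibniz rule: a typical term of $\partial^\beta(P_k\chi(\lambda\cdot))$ is a constant times $(\partial^{\beta_1}P_k)(x)\cdot\lambda^{|\beta_2|}(\partial^{\beta_2}\chi)(\lambda x)$ with $\beta_1+\beta_2=\beta$; on the support $\{|x|\le 1/\lambda\}$ the polynomial $\partial^{\beta_1}P_k$, homogeneous of degree $k-|\beta_1|\ge k-|\beta|\ge 1$, is bounded by a constant times $\lambda^{-(k-|\beta_1|)}$, so the term is bounded by $C\,\lambda^{|\beta_2|-k+|\beta_1|}=C\,\lambda^{|\beta|-k}\le C\lambda^{-1}$. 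Using this, choose inductively a sequence $\lambda_k\ge 1$ (we may take $\lambda_0=\lambda_1=1$) so large that
\begin{equation*}
\sup_{|\beta|\le k-1}\ \sup_x\big|\partial^\beta\big(P_k(x)\chi(\lambda_k x)\big)\big|\le 2^{-k},\qquad k\ge 2,
\end{equation*}
and set $f(x)=\sum_{k=0}^\infty P_k(x)\,\chi(\lambda_k x)$. For each fixed $m$, the tail $\sum_{k\ge m+1}P_k(\cdot)\chi(\lambda_k\cdot)$ is, term by term, bounded in every $C^m$-norm by $2^{-k}$ and hence converges uniformly together with all derivatives of order $\le m$; therefore $f$ and all its partial derivatives exist and are continuous, i.e.\ $f\in C^\infty_0$.

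It remains to identify the Taylor expansion. Fix $j$. Since the series defining $f$ converges in $C^j$ on a neighborhood of $0$, the $j$-jet of $f$ at $0$ equals the sum of the $j$-jets at $0$ of the summands. For $k\le j$, the function $\chi(\lambda_k x)$ is identically $1$ near $0$, so $P_k(x)\chi(\lambda_k x)$ agrees with $P_k(x)$ as a germ at $0$ and contributes exactly $P_k$ to the $j$-jet; for $k>j$, the function $P_k(x)\chi(\lambda_k x)$ vanishes to order $k>j$ at $0$ and contributes nothing. Hence the $j$-jet of $f$ at $0$ is $\sum_{k\le j}P_k$, which is precisely the truncation of $h$ to order $j$. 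As $j$ was arbitrary, $\widetilde f=h$, proving the lemma. I expect the only delicate point to be the multi-index bookkeeping in the displayed estimate above, namely exploiting that the support shrinks like $1/\lambda$ to gain the decaying factor $\lambda^{|\beta|-k}$ whenever $|\beta|<k$; everything else is routine.
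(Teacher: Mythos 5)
Your construction is correct and complete: the Leibniz-rule estimate $\sup_x|\partial^\beta(P_k(x)\chi(\lambda x))|\le C_{k,\beta}\lambda^{|\beta|-k}$ for $|\beta|<k$ is the right quantitative heart of the argument, the inductive choice of $\lambda_k$ gives $C^m$-convergence of every tail, and the jet computation at the end is sound because for $k\le j$ the cut-off is identically $1$ near the origin while for $k>j$ the summand vanishes to order $k>j$. Note, however, that the paper does not prove this lemma at all: it cites Borel's original article and points to Narasimhan's treatment, which deduces the statement from Whitney's Extension Theorem applied to the single point $\{0\}$ with the prescribed jet. So your route is genuinely different from the one the paper endorses. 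What you gain is a short, self-contained, entirely elementary argument that also makes the dependence on the data explicit (the function $f$ is an explicit series built from $h$ and one fixed bump function); what the Whitney route buys is generality, since it produces smooth extensions of compatible jets prescribed on arbitrary closed sets, of which Borel's lemma is the zero-dimensional special case. One cosmetic caution: in this paper $C^\infty_0$ denotes the ring of \emph{germs} at the origin, so your opening phrase ``a function $\chi\in C^\infty_0$ with $\supp\chi\subset\{|x|\le 1\}$'' clashes with the established notation; you should say instead that $\chi$ is a smooth compactly supported function on $\R^n$, and only pass to germs at the very end when asserting $f\in C^\infty_0$.
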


\begin{remark} Note that Lemma \ref{Borel} is not useful for constructing flat functions. Indeed, for the power series whose coefficients are all equal to $0$, Borel's construction yields the zero function.
\end{remark}
Let $C^{\omega}(\Omega)$ be the ring of complex-valued real analytic functions on an open set $\Omega\subset\mathbb{R}^n$ and $C^{\omega}_0$ the corresponding 
ring of germs at the origin. By definition,
$$f\in C_0^{\omega}\Longleftrightarrow \widetilde{f}(x)\text{ converges to } f(x)\text{ for } x \text{ in a neighborhood of }0.$$

We now introduce differential operators on an open subset $\Omega\subset \R^n$. 

\begin{notation}
We use the subscript notation for derivatives, writing $\partial_{x_j}$ in place of $\partial/\partial x_j$. For every multi-index $\alpha=(\alpha_1,\dots,\alpha_n)\in\mathbb{N}^n$, we let $D^{\alpha}:=\partial_{x_1}^{\alpha_1}\dots\partial_{x_n}^{\alpha_n}$. We write $|\alpha|$ for the length of $\alpha$, that is, $|\alpha|=\alpha_1+\dots+\alpha_n$. 
\end{notation}
 Recall that $L$ is a differential operator on $\Omega$ of order $m$ with real analytic coefficients if
\begin{equation}\label{L}
L=\sum_{|\alpha|\leq m}a_{\alpha}D^{\alpha},\quad a_{\alpha}\in C^{\omega}(\Omega).
\end{equation}


\begin{definition}\label{defin}
Let $L$ be a differential operator with real analytic coefficients on $\Omega \subset\R^n$ as in \eqref{L}. We say that $L$ is {\em elliptic} if $$\sum_{|\alpha|=m}a_{\alpha}(x)\xi^{\alpha}\neq 0 \text{\,\,\,for\,\,\,}x\in\Omega,\,\, \xi\in\mathbb{R}^n\setminus\{0\}.$$ 
\end{definition}

\begin{examplex} 
The Laplacian $\Delta=\sum_{j=1}^n\partial^2_{x_j}$ and its powers are elliptic operators in $\R^n$. The Cauchy-Riemann operator $\partial_{x}+i\partial_{y}$ is elliptic in $\R^2\simeq\C$.
\end{examplex}

The next lemma is the main technical point in the paper. The proof is presented in Section \ref{theproof} and requires Borel's Lemma \ref{Borel}. 
\begin{lemma}\label{main lemma}
 Let $L$ be an elliptic differential operator with real analytic coefficients defined on an open neighborhood of the origin in $\R^{n}$, where $n\geq 2$. Then there exist $G\in C_0^{\infty}$ and $g\in c_f$ such that $L(G)=g$ and the Taylor series expansion $\widetilde{G}$ of $G$ at 
$0$ is not convergent in any neighborhood of $0$.\
\end{lemma}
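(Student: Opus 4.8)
The plan is to pass to formal power series. Since the Taylor expansion map $\sim$ of \eqref{maptilda} is a ring homomorphism commuting with each $\partial_{x_j}$, the operator $L=\sum_{|\alpha|\le m}a_\alpha D^\alpha$ induces an operator $\widetilde L:=\sum_{|\alpha|\le m}\widetilde{a_\alpha}\,D^\alpha$ on $\C[[x_1,\dots,x_n]]$ satisfying $\widetilde{L(F)}=\widetilde L(\widetilde F)$ for all $F\in C_0^\infty$. It therefore suffices to exhibit a \emph{divergent} formal power series $H$ with $\widetilde L(H)=0$: by Borel's Lemma \ref{Borel} there is $G\in C_0^\infty$ with $\widetilde G=H$, and then $g:=L(G)$ satisfies $\widetilde g=\widetilde L(H)=0$, so $g\in c_f$, while $\widetilde G=H$ is divergent by construction. (We write $m=\ord L$; note $m\ge 1$, since for $m=0$ the operator is invertible and the conclusion fails.)

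The series $H$ is produced by a purely formal Cauchy--Kovalevskaya argument. Evaluating the ellipticity condition of Definition \ref{defin} at $\xi=(0,\dots,0,1)$ shows that the coefficient $a:=a_{(0,\dots,0,m)}$ of $\partial_{x_n}^m$ is non-vanishing on $\Omega$; moreover, since $|\alpha|\le m$, the only multi-index with $\alpha_n=m$ is $(0,\dots,0,m)$, so
\[
L \;=\; a\,\partial_{x_n}^m \;+\; \sum_{j=0}^{m-1} L_j\,\partial_{x_n}^j ,
\]
where each $L_j$ is a differential operator in $x'=(x_1,\dots,x_{n-1})$ whose real analytic coefficients may still depend on $x_n$. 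Passing to formal power series and using that $\widetilde a$ is a unit (as $a(0)\ne 0$), the equation $\widetilde L(H)=0$ becomes
\[
\partial_{x_n}^m H \;=\; -\,\widetilde a^{-1}\sum_{j=0}^{m-1}\widetilde{L_j}\,\partial_{x_n}^j H .
\]
Writing $H=\sum_{k\ge 0}h_k(x')\,x_n^k$ with $h_k\in\C[[x_1,\dots,x_{n-1}]]$ and comparing coefficients of $x_n^\ell$, the left-hand side contributes $\tfrac{(\ell+m)!}{\ell!}\,h_{\ell+m}$, while the right-hand side involves only $h_0,\dots,h_{\ell+m-1}$ and their $x'$-derivatives, because at most $m-1$ of the $x_n$-derivatives there fall on $H$. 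Hence the recursion is lower triangular in $k$: for any choice of Cauchy data $h_0,\dots,h_{m-1}$ there is a unique $H$ with those first $m$ coefficients and $\widetilde L(H)=0$. No growth estimates enter, which is what makes this \emph{formal} version of Cauchy--Kovalevskaya elementary (in contrast to the analytic one); it is also the only place where $n\ge 2$ is used, since the Cauchy data now range over the infinite-dimensional ring $\C[[x_1,\dots,x_{n-1}]]$.

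Finally I take $h_0:=\sum_{k\ge 0}k!\,x_1^k$, which diverges at every point with $x_1\ne 0$ and so converges on no neighborhood of $0$ in $\R^{n-1}$, together with $h_1=\dots=h_{m-1}=0$. Then $H|_{x_n=0}=h_0$, and since the restriction to $\{x_n=0\}$ of a power series convergent near $0\in\R^n$ is convergent near $0\in\R^{n-1}$, the series $H$ is convergent on no neighborhood of the origin. Combined with the reduction of the first paragraph, this proves the lemma. The one load-bearing point is the lower-triangular structure of the formal recursion, which rests precisely on ellipticity making the $\partial_{x_n}^m$-coefficient invertible and excluding any other $\alpha_n=m$ term; once that is in place, the remaining verifications are routine bookkeeping.
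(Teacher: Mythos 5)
Your proof is correct, and it takes a genuinely different and considerably shorter route than the paper's. The paper establishes Lemma~\ref{main lemma} by first proving Proposition~\ref{prop}, which produces a sequence of \emph{genuine} real analytic solutions $u_k$ to $Lu_k=0$ on a common neighborhood, vanishing to prescribed orders and restricting to chosen polynomials $p_k$ on $\{x_n=0\}$; this requires not only the coefficient recursion of Lemma~\ref{lemmanu} but also the uniform convergence estimates of Lemmas~\ref{l1}, \ref{l2} and \ref{lemmalungo}, after which the Baire category theorem is invoked to choose coefficients $b_k$ making $\sum_k b_k u_k$ diverge along a line. You sidestep all of this by observing that only a \emph{formal} solution $H$ with $\widetilde L(H)=0$ is needed: since the Taylor-series homomorphism $\sim$ intertwines $L$ with $\widetilde L$, Borel's Lemma~\ref{Borel} applied to any divergent such $H$ yields the required pair $(G,g)$ at once, with flatness of $g$ immediate from $\widetilde{g}=\widetilde L(\widetilde G)=0$. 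Your formal Cauchy--Kovalevskaya recursion is purely algebraic---no growth estimates whatsoever---and its lower-triangularity rests on exactly the same use of ellipticity as Lemma~\ref{lemma0}, namely that $a_{(0,\dots,0,m)}(0)\neq 0$; feeding in the manifestly divergent Cauchy datum $h_0=\sum_{k}k!\,x_1^k$ then replaces the Baire argument, since $H|_{x_n=0}=h_0$. What your shortcut forgoes is the extra structure the paper actually builds (a family of analytic, not merely formal, solutions on a common domain), but since that structure is used nowhere beyond the proof of Lemma~\ref{main lemma} itself, the simplification is legitimate and, for the stated goal, more efficient.
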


It is a deep classical result that elliptic differential operators with real analytic coefficients have analytic solutions for analytic data. This statement was first proved by Petrowsky \cite{Pe39} for homogeneous operators with constant coefficients. After a number of successive generalisations  \cite{MN57,M58a,M58b}, it is nowadays a textbook result. 
\begin{theorem}\cite[Theorem 7.5.1]{H}\label{Pet}
Let $L$ be an elliptic differential operator with real analytic coefficients defined on an open set $\Omega \subset\R^n$. If  $g\in C^{\omega}(\Omega)$ and $f$ is a distribution on $\Omega$ such that $Lf=g$ in $\Omega$ in the sense of distributions, then $f\in C^{\omega}(\Omega).$
\end{theorem}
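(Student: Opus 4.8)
The plan is to deduce this from two classical facts: $C^\infty$ hypoellipticity of $L$, and a quantitative upgrade from smoothness to analyticity of Morrey--Nirenberg type. Since analyticity is a local property, it suffices to show $f$ is real analytic in a neighbourhood of an arbitrary point of $\Omega$; after a translation, take that point to be the origin, with $L$ elliptic of order $m$ on a ball $B_R$ about $0$. First I would invoke interior elliptic regularity: because $L$ has smooth coefficients and invertible principal symbol it is hypoelliptic, so the distributional identity $Lf=g\in C^\infty$ forces $f\in C^\infty$ near $0$ --- either by constructing a properly supported parametrix $E$ with $EL=\id+(\text{smoothing})$, whence $f=Eg+(\text{smooth})$, or by the standard difference-quotient bootstrap built on G\r{a}rding's inequality. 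From here on $f$ may be treated as a genuine smooth function, and the remaining task is purely quantitative.

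The engine is the scaled interior elliptic estimate: there is a constant $C_0=C_0(n,m,L,R)$ such that for concentric balls $B_\rho\subset B_{\rho'}\subset B_R$ and $u\in C^\infty(\overline{B_{\rho'}})$,
\[
\sum_{|\alpha|\le m}(\rho'-\rho)^{|\alpha|}\,\|D^\alpha u\|_{L^2(B_\rho)}\ \le\ C_0\Big((\rho'-\rho)^m\|Lu\|_{L^2(B_{\rho'})}+\|u\|_{L^2(B_{\rho'})}\Big),
\]
obtained by freezing coefficients, Fourier-analyzing the constant-coefficient principal part, a perturbation-and-cutoff argument, absorption of lower-order terms, and rescaling. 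Fixing a ball $B_{r_0}\Subset B_R$, the hypothesis that the coefficients $a_\beta$ and the datum $g$ are analytic furnishes a constant $M\ge 1$ with the Cauchy bounds $\sup_{B_{r_0}}|D^\gamma a_\beta|\le M^{|\gamma|+1}|\gamma|!$ and $\|D^\gamma g\|_{L^2(B_{r_0})}\le M^{|\gamma|+1}|\gamma|!$ for all multi-indices $\gamma$ and all $|\beta|\le m$.

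Next I would bound every derivative of $f$ by a careful induction on its order $k$, with the target that on a fixed smaller ball $B_{r_0/2}$ there are constants $A,B$ --- with $B$ large in terms of $M,C_0,n,m,r_0$ --- such that $|D^\alpha f(x)|\le A\,B^{|\alpha|}|\alpha|!$ for all $x\in B_{r_0/2}$ and all $\alpha$; this is exactly the Cauchy characterisation of real analyticity of $f$ near $0$. The key mechanism is that an order-$k$ derivative at a point $x_0\in B_{r_0/2}$ should be estimated by applying the displayed estimate (followed by Sobolev embedding) on the small ball $B(x_0,c/k)$, so that the loss $(\rho'-\rho)^{-m}\sim (k/c)^m$ incurred in the elliptic estimate is balanced against the combinatorial gain $k!/(k-m)!\sim k^m$. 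Concretely, for $|\alpha'|=k-m$ one uses the differentiated equation
\[
L(D^{\alpha'}f)=D^{\alpha'}g-\sum_{|\beta|\le m,\ 0<\gamma\le\alpha'}\binom{\alpha'}{\gamma}\,(D^{\gamma}a_\beta)\,(D^{\alpha'-\gamma+\beta}f),
\]
in which every derivative of $f$ on the right has order at most $k-1$ and is thus already controlled by the inductive hypothesis on $B(x_0,c/k)$ (contained in $B_{r_0}$ once $k$ is large), and one invokes the Cauchy bounds on $a_\beta$ and $g$ together with multinomial inequalities of the form $\sum_\gamma\binom{\alpha}{\gamma}|\gamma|!\,|\alpha-\gamma|!\le C^{|\alpha|}|\alpha|!$ to absorb the constants, once $B$ and $c$ are tuned appropriately (the finitely many base cases $k<m$ being immediate from the estimate applied with $\alpha'=0$).

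I expect the delicate point to be precisely this tuning: making the constants $A,B,c$ work out uniformly in $k$, i.e. ensuring that the elliptic loss, the shrinking radii, and the factorial growth cancel in the induction rather than merely nearly cancel --- this is the substance of the Morrey--Nirenberg estimate and where all the combinatorial bookkeeping lives. A cleaner alternative, essentially the route behind the cited \cite[Theorem 7.5.1]{H}, is to complexify: extend the $a_\alpha$ and $g$ to holomorphic functions on a complex neighbourhood, construct an analytic parametrix $E$ for $L$ out of the explicit Petrowsky fundamental solution of the constant-coefficient principal part (which is analytic off the diagonal), and read off analyticity of $f$ directly from $f=Eg+Rf$ with $R$ having an analytic kernel.
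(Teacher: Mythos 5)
The paper does not actually prove this statement: it is quoted verbatim from H\"ormander \cite[Theorem~7.5.1]{H} and used as a black box, so there is no internal proof to compare against. Your outline is the standard Morrey--Nirenberg argument that the cited reference carries out (reduction to $f\in C^\infty$ by hypoellipticity, then Cauchy bounds $|D^\alpha f|\le A B^{|\alpha|}|\alpha|!$ by induction via scaled interior $L^2$ estimates on shrinking balls, balancing the loss $(\rho'-\rho)^{-m}$ against the factorial gain), and the mechanism you describe --- including the differentiated equation and the multinomial absorption --- is the right one; the only caveat is that the ``tuning'' you defer is precisely where the entire content of the theorem lives, so as written this is a correct plan rather than a complete proof.
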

We are now ready to state and prove our main theorem. The proof relies on Lemma \ref{main lemma} and Theorem \ref{Pet}. 
\begin{theorem}\label{nonsolv}
Let $p\in\R^n$, where $n\geq 2$, and let $L$ be an elliptic differential operator with real analytic coefficients defined on an open neighborhood $\Omega$ of $p$ in $\R^n$. Then there exists a germ of a flat function $g$ at $p$ such that there is no smooth germ $u$ flat at $p$ solving $Lu=g$. 
\end{theorem}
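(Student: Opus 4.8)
The plan is to deduce the theorem directly from Lemma \ref{main lemma} and the elliptic regularity statement in Theorem \ref{Pet}; the substantive work has already been isolated in Lemma \ref{main lemma}, so what remains is short.

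First I would reduce to the case $p=0$. Translating coordinates by $p$ replaces $L$ by an operator of the same order whose coefficients are the translates of the $a_\alpha$, hence again real analytic on a neighborhood of the origin, and the condition in Definition \ref{defin} is manifestly translation invariant; moreover a germ is flat at $p$ if and only if its translate is flat at $0$, and the equation $Lu=g$ near $p$ corresponds to the translated equation near $0$. So it suffices to produce a germ $g\in c_f$ at the origin with the required non-solvability property. Next I would invoke Lemma \ref{main lemma} to obtain $G\in C_0^\infty$ and $g\in c_f$ with $L(G)=g$ and such that the Taylor series $\widetilde G$ of $G$ at $0$ does not converge on any neighborhood of $0$. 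I claim this $g$ works. Suppose, for contradiction, that there is a smooth germ $u$, flat at $0$, with $Lu=g$ (as germs, i.e.\ on some common neighborhood of $0$). Then $L(G-u)=g-g=0$ there. Since $G-u$ is smooth it is in particular a distribution, so Theorem \ref{Pet} applies with analytic (indeed identically zero) right-hand side and yields that $G-u$ is real analytic on a neighborhood of $0$. Hence $\widetilde{G-u}=\widetilde G-\widetilde u$ converges to $G-u$ near $0$. But $u$ is flat at $0$, so $\widetilde u=0$, whence $\widetilde G$ itself converges on a neighborhood of $0$, contradicting the defining property of $G$ from Lemma \ref{main lemma}. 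Therefore no such $u$ exists, which is the assertion of the theorem.

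Given that Lemma \ref{main lemma} and Theorem \ref{Pet} are in hand, I do not expect any real obstacle in this argument; the only things to keep straight are purely bookkeeping matters — that everything takes place at the level of germs, so that the neighborhoods on which $G$, $u$, and the analyticity conclusion are defined may be intersected freely, and that $\sim$ is a ring homomorphism so that $\widetilde{G-u}=\widetilde G-\widetilde u$. The genuine difficulty of the paper lies entirely behind Lemma \ref{main lemma}, namely in constructing an elliptic solution $G$ of a flat equation whose Taylor series at $0$ diverges; that construction, using Borel's Lemma \ref{Borel}, is where I would expect the real effort to go, and it is carried out in Section \ref{theproof}.
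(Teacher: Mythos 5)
Your proof is correct and follows the same route as the paper: reduce to $p=0$, obtain $G$ and $g$ from Lemma \ref{main lemma}, observe that a hypothetical flat solution $u$ would make $G-u$ a smooth solution of $L(G-u)=0$ and hence, by Theorem \ref{Pet}, real analytic, contradicting the prescribed divergence of $\widetilde G$ since $\widetilde u=0$. The extra remarks on translation invariance and the germ-level bookkeeping are harmless elaborations of what the paper leaves implicit.
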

 
\begin{proof}
We can assume without loss of generality that $p$ is the origin. Let $g\in c_f$ and $G\in C^{\infty}_0$ be as in Lemma \ref{main lemma}. Assume by contradiction that there exists a flat solution  $u\in c_f$ to $Lu=g$. 
Then $\widetilde{G-u}=\widetilde{G}-\widetilde{u}=\widetilde{G}$. 
Recall that, by construction, the formal power series $\widetilde{G}$ does not converge in any neighborhood of $0$. Hence $G-u$ is not analytic at $0$. The function $G-u$, however, satisfies $L(G-u)=0$. Theorem \ref{Pet} therefore implies $G-u\in C^{\omega}_0$, which is a contradiction.
\end{proof}
 
 Here is a special instance of Theorem \ref{nonsolv} in the case of the Laplace operator and its powers in $\mathbb{R}^n$, for $n\geq 2$.
\begin{corollary}
For every positive integer $m$, there exists a germ of a smooth function $g$ that is flat at $0$ such that every local solution $u$ to $\Delta^m u=g$ vanishes to finite order at $0$.
\end{corollary}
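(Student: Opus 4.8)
The plan is to obtain this as the special case $L=\Delta^m$ of Theorem \ref{nonsolv}, so essentially no new work is needed beyond bookkeeping. First I would verify that $\Delta^m$ meets the hypotheses of Theorem \ref{nonsolv} for each $m\geq 1$: it has constant coefficients, which are in particular real analytic on all of $\R^n$, and it is elliptic in the sense of Definition \ref{defin} because it has order $2m$ and the corresponding top-order symbol is $\big(\sum_{j=1}^{n}\xi_j^2\big)^m=|\xi|^{2m}$, which is nonzero for every $\xi\in\R^n\setminus\{0\}$. Applying Theorem \ref{nonsolv} with $p=0$ and $L=\Delta^m$ then yields a germ $g$ of a smooth function, flat at $0$, with the property that no smooth germ $u$ that is flat at $0$ solves $\Delta^m u=g$.

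The remaining step is purely a matter of rephrasing. By definition a smooth germ $u$ is flat at $0$ exactly when every $k$-jet of $u$ at $0$ vanishes, that is, exactly when $u$ vanishes to infinite order at $0$. Hence the conclusion ``no smooth local solution is flat at $0$'' is literally the same as ``every smooth local solution vanishes to finite order at $0$'', which is the assertion of the corollary. I would also note, so that the statement is not vacuous, that smooth local solutions of $\Delta^m u=g$ do exist: this is classical for elliptic operators and was already recalled in the introduction.

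I do not expect any genuine obstacle: all the substance is in Theorem \ref{nonsolv}, and behind it in Lemma \ref{main lemma}. The only points worth a second look are the check that $\Delta^m$ is elliptic according to Definition \ref{defin} and the observation that flatness at a point and vanishing to infinite order there are the same condition, so that the translation between the two formulations is an equivalence and not merely a one-way implication.
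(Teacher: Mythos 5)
Your proposal is correct and is exactly how the paper intends the corollary to be read: it is stated as a special instance of Theorem \ref{nonsolv}, and the only checks needed are that $\Delta^m$ has real analytic coefficients and top-order symbol $|\xi|^{2m}\neq 0$ for $\xi\neq 0$, plus the observation that ``not flat at $0$'' is the same as ``vanishes to finite order at $0$''. Nothing further is needed.
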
 

 \begin{remark}\label{case1}
The hypothesis $n\geq 2$ in Theorem \ref{nonsolv} cannot be dropped. Indeed, in dimension $1$, every elliptic differential operator $L$ with smooth coefficients admits local flat solutions for flat data. To see this fact, let $$L=\partial_x^n+a_{n-1}\partial_x^{n-1}+\dots+a_1\partial_{x}+a_0,\quad a_j\in C^{\infty}(\R)$$ and let $g\in c_f$ be the germ of a flat function at the origin. Let now $f$ be the (unique) smooth local solution at $0$ of the Cauchy problem
\begin{equation*}
\begin{cases}
Lf=g \\
\partial^j_x f (0)=0\quad j=0,1,\dots, n-1.
\end{cases}
\end{equation*}
We claim that $f\in c_f$. Note that
\begin{equation}\label{dim1}
\partial_x^n f=g-a_{n-1}\partial_x^{n-1}f-\dots-a_0 f.
\end{equation}
Evaluating \eqref{dim1} at $0$ we obtain $\partial_x^n f (0)=0$. Taking derivatives of \eqref{dim1} and exploiting the fact that $g$ is flat at $0$, one proves inductively that $\partial_x^{k}f(0)=0$ for all $k$. Hence $f\in c_f$.
 \end{remark}
 
 \begin{remark}
 We recall a famous example of local non-solvability due to Lewy \cite{L57}. See also \cite[pages 313-314]{SS11}. 
In $\R^3$ with variables $x,y,t$, consider the differential operator 
\begin{equation}\label{Lewy}
L=\frac{1}{2}\bigg{(}\frac{\partial}{\partial x}+i\frac{\partial}{\partial y}\bigg{)}-i(x+iy)\frac{\partial}{\partial t}.
\end{equation}
One can prove that there exists a function $f\in C^{\infty}(\R^3)$ which is flat at $0$ and such that the equation $Lu=f$ has no local solution at $0$ in the weak sense of distributions. Note that the operator $L$ defined in \eqref{Lewy} is not elliptic at $0$. Throughout the paper we only consider elliptic operators, and therefore local smooth solutions always exist.
 \end{remark}
 
 \section{Proof of Lemma \ref{main lemma}}\label{theproof}
 The proof of Lemma \ref{main lemma} relies on the following proposition, whose proof is in turn given in Section \ref{proofofProp}.
\begin{proposition}\label{prop}
Let $L$ be an elliptic differential operator of order $m$ with real analytic coefficients defined on an open neighborhood $\Omega$ of the origin in $\R^{n}$, where $n\geq 2$. Then there exist a sequence of polynomials $(p_k)_{k\in\mathbb{N}}\subset\C[x_1,\dots,x_{n-1}]$, a neighborhood $U$ of the origin in $\R^n$ and a sequence $(u_k)_{k\in\mathbb{N}}$ of real analytic functions in $U$ such that the following hold:
\begin{itemize}
\item For each $k$ we have $Lu_k=0$.
\item Each $u_k$ vanishes to order $k$ at the origin.
\item For each $k$, the power series expansion at $0$ of $u_k$ converges to $u_k$ on $U$.
\item For each $k$, the polynomial $p_k$ is homogeneous of degree $k$ and moreover $u_k=p_k$ on the hyperplane $x_n=0$.
\end{itemize}
\end{proposition}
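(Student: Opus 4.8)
The plan is to build the analytic solutions $u_k$ by solving a holomorphic Cauchy problem after complexifying the operator $L$, and then extracting the homogeneous polynomials $p_k$ from their restriction to $\{x_n=0\}$. First I would use ellipticity to arrange coordinates so that the coefficient of $\partial_{x_n}^m$ in $L$ is nonvanishing near the origin; by Definition \ref{defin} the principal symbol is nonzero at $e_n=(0,\dots,0,1)$, and after a linear change of variables we may assume $a_{(0,\dots,0,m)}(0)\neq 0$, so that $\{x_n=0\}$ is noncharacteristic for $L$. Since the coefficients $a_\alpha$ are real analytic, they extend to holomorphic functions on a polydisc in $\C^n$, and $L$ extends to a holomorphic differential operator $\hat L$ there; the noncharacteristic condition persists after complexification.

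The key step is then to apply the holomorphic Cauchy--Kovalevskaya theorem: for any prescribed holomorphic Cauchy data $\varphi_0,\dots,\varphi_{m-1}$ on $\{x_n=0\}$ (functions of $x_1,\dots,x_{n-1}$), there is a unique holomorphic solution of $\hat L v=0$ with $\partial_{x_n}^j v|_{x_n=0}=\varphi_j$ on a common neighborhood $\hat U$ of the origin in $\C^n$, and by the effective version of Cauchy--Kovalevskaya this neighborhood can be taken uniform over data bounded on a fixed polydisc. To produce $u_k$ I would take as Cauchy data $\varphi_0 = q_k$, a suitably chosen homogeneous polynomial of degree $k$ in $x_1,\dots,x_{n-1}$, and $\varphi_1=\dots=\varphi_{m-1}=0$ (or more cleverly choose all the data homogeneous of the appropriate degrees, $\varphi_j$ homogeneous of degree $k-j$, so that the solution is itself homogeneous of degree $k$ in all $n$ variables up to higher-order corrections — actually, by uniqueness and the fact that $\hat L$ has analytic but not homogeneous coefficients, one only gets vanishing to order $k$, which is exactly the second bullet). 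Restricting the resulting real analytic solution $u_k = v|_{\R^n}$ to $\R^n$ and then to $\{x_n=0\}$ gives $u_k|_{x_n=0}$, whose degree-$k$ homogeneous part I would define to be $p_k$; the lower-order part vanishes because $u_k$ vanishes to order $k$ at the origin. The third bullet — convergence of the Taylor series on a fixed $U$ — follows because $u_k$ is genuinely real analytic on the fixed real neighborhood $U = \hat U \cap \R^n$ coming from the uniform Cauchy--Kovalevskaya estimate, and one may shrink $U$ once and for all.

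The main obstacle I anticipate is the \emph{uniformity} of the domain $U$ across all $k$: a naive application of Cauchy--Kovalevskaya gives, for each $k$, a neighborhood that could in principle shrink as the degree of the Cauchy data grows. To handle this I would invoke the quantitative form of the Cauchy--Kovalevskaya theorem (majorant method), which yields a radius of convergence depending only on bounds for the coefficients of $\hat L$ on a fixed polydisc and on a bound for the Cauchy data on a fixed polydisc — not on the degree; choosing the polynomials $p_k$ (equivalently the Cauchy data $q_k$) with uniformly bounded sup-norm on that fixed polydisc, say $\|q_k\|_{L^\infty} \le 1$, then gives all $u_k$ real analytic on one common $U$. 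A secondary point to check is that $p_k$ is genuinely homogeneous of degree $k$ and nonzero: nonvanishing is automatic once we pick $q_k\neq 0$ homogeneous of degree $k$, since $p_k$ is precisely the degree-$k$ part of $u_k|_{x_n=0}=q_k + (\text{higher order})$, and as $u_k$ vanishes to order $k$ there is no lower-order contribution, so $p_k = q_k$. Finally I would remark that the condition $n\geq 2$ enters only in that for $n=1$ the hyperplane $\{x_n=0\}$ is a point and the construction degenerates, consistent with Remark \ref{case1}.
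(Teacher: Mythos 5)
Your proposal takes essentially the same route as the paper: both solve the noncharacteristic Cauchy problem $Lu_k=0$ with $u_k=p_k$ and $\partial_{x_n}^j u_k=0$ ($1\le j\le m-1$) on $\{x_n=0\}$, and both correctly pin down the real difficulty --- uniformity of the domain $U$ across $k$ --- by normalizing the data once and for all (you bound $\|q_k\|_{L^\infty}$ on a fixed polydisc; the paper bounds $|Lp_k|$ on $\overline{\mathbb{D}}_R$). The difference is one of self-containment: you invoke the quantitative, majorant-method form of Cauchy--Kovalevskaya as a black box, whereas the paper, following H\"ormander, carries out the Picard-type iteration $D^{\beta}v_{\nu+1}=\sum_{\alpha}a_{\alpha}D^{\alpha}v_{\nu}-Lp_k$ explicitly (Lemma \ref{lemmanu}) and proves the uniform convergence estimate by hand (Lemmas \ref{l1}, \ref{l2} and \ref{lemmalungo}). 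Your route is shorter at the expense of a strong external theorem; the paper's is longer but elementary, which is one of its stated aims. Two small points to tighten in your write-up. First, no linear change of variables is needed to arrange $a_{(0,\dots,0,m)}\neq 0$: ellipticity directly forces $a_{(0,\dots,0,m)}(x)\neq 0$ for \emph{every} $x$, since the principal symbol is nonzero at $\xi=(0,\dots,0,1)$; this is exactly Lemma \ref{lemma0}. Second, $u_k|_{x_n=0}$ is \emph{literally} the Cauchy datum $q_k$, not ``$q_k$ plus higher order,'' so $p_k=q_k$ holds by fiat; and the assertion that $u_k$ vanishes to order exactly $k$ (not merely $\geq k$) should be argued as the paper does: $\ord_0(u_k)\geq k$, and the degree-$k$ homogeneous part of $u_k$ restricts on $\{x_n=0\}$ to the nonzero polynomial $q_k$, hence cannot vanish.
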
 
 
 \begin{proof}[Proof of Lemma \ref{main lemma}]
Let $U\subset\mathbb{R}^n, (p_k)_{k\in\mathbb{N}}\subset\C[x_1,\dots,x_{n-1}]$ and $(u_k)_{k\in \mathbb{N}}\subset C^{\omega}(U)$ be as in Proposition \ref{prop}. Let $Z(p_k)$ be the zero set of $p_k$ in $\R^{n-1}$. Since each $Z(p_k)$ has empty interior in $\R^{n-1}$, the Baire category theorem implies that the union $\bigcup_{k\in\mathbb{N}}Z(p_k)$ is nowhere dense in $\R^{n-1}.$ In particular, there exists a point $(\bar{x}_1,\dots,\bar{x}_{n-1})\in \mathbb{R}^{n-1}$ arbitrarily close to the origin of $\R^{n-1}$ such that $$p_k(\bar{x}_1,\dots,\bar{x}_{n-1})\neq 0 \text{ for all } k.$$ 
For each $k\in\mathbb{N}$ define 
\begin{equation*}
b_k:=\frac{k!}{|p_k(\bar{x}_1,\dots,\bar{x}_{n-1})|}
\end{equation*} and consider the formal power series $\sum_k b_k u_k\in\C[[x_1,\dots,x_{n}]]$. We first note that it is well defined. Indeed, for each integer $j$, only the functions $u_k$ with $k\leq j$ contain terms of order $j$. We claim that the formal power series $\sum_k b_k u_k$ does not converge in any neighborhood of 0. To prove this fact, consider the point $\bar{x}=(\bar{x}_1,\dots,\bar{x}_{n-1},0)$ and evaluate along the real line $\mathcal{L}:=\{x=t\bar{x} \,\vert \,t\in\mathbb{R}\}\subset\mathbb{R}^n$. Note that $\mathcal{L}$ is contained in the hyperplane $x_n=0$. We thus have 
\begin{equation*}\label{serie}
\sum_{k=0}^{\infty}|b_k u_k(t\bar{x})|=\sum_{k=0}^{\infty}|b_k p_k(t\bar{x}_1,\dots,t\bar{x}_{n-1})|=\sum_{k=0}^{\infty}|b_k t^kp_k(\bar{x}_1,\dots,\bar{x}_{n-1})|=\sum_{k=0}^{\infty}|t|^k k!,
\end{equation*}
which is clearly divergent for $t\in\R\setminus\{0\}$.
Borel's Lemma \ref{Borel} implies the existence of a smooth germ $G\in  C_0^{\infty}$ such that $\widetilde{G}=\sum_k b_kp_k(x)$. Let now $g:=L(G)$. It remains to prove that $g$ is flat at $0$. 
We thus need to show, for every multi-index $\alpha\in\mathbb{N}^n$, that $D^{\alpha}LG(0)=0$. By Taylor's theorem, there exists a neighborhood $U$ of the origin in $\mathbb{R}^n$ and a smooth function $\eta$ vanishing at $0$ to order at least $m+|\alpha|+1$ such that 
\begin{equation*}
G=\sum_{k=1}^{m+|\alpha|} b_ku_k+\eta\,\,\,\text{in } U.\end{equation*}
We then have
\begin{equation}\label{U}
D^{\alpha}LG=D^{\alpha}L\bigg{(}\sum_{k=1}^{m+|\alpha|} b_ku_k+\eta\bigg{)}=D^{\alpha}\bigg{(}\sum_{k=1}^{m+|\alpha|} b_k L u_k\bigg{)}+D^{\alpha}L\eta \,\,\,\text{ in }U.
 \end{equation}
Evaluating \eqref{U} at $0$, we obtain
 $$D^{\alpha}LG(0)=D^{\alpha}\bigg{(}\sum_{k=1}^{m+|\alpha|} b_k L u_k\bigg{)}(0)+D^{\alpha}L\eta(0)=0,$$
where we have exploited that $Lu_k=0$ near $0$ for each $k$ and that $\eta$ vanishes at $0$ to order at least $m+|\alpha|+1$. This concludes the proof.
\end{proof}
\section{Proof of Proposition \ref{prop}}\label{proofofProp}

We follow H\"ormander's reasoning in the proof of \cite[Theorem 5.11]{H}.
\begin{remark}
For the special case of $L$ homogeneous with constant coefficients, Proposition \ref{prop} also holds without the hypothesis of $L$ being elliptic (and the same is therefore true for Lemma \ref{main lemma}). In order to produce a sequence $(u_k)_{k\in\mathbb{N}}$ of solutions to $Lu=0$ with the property that each $u_k$ vanishes to order $k$ at the origin, one exploits the following observation. If $L=\sum_{|\alpha|=m}a_{\alpha}D^{\alpha}$ with $a_{\alpha}\in\mathbb{C}$, and $\zeta=(\zeta_1,\dots,\zeta_n)\in\C^n\setminus\{0\}$ is such that $\sum_{|\alpha|=m}a_{\alpha}{\zeta}^{\alpha}=0$, then $u_k:=(\zeta_1x_1+\dots+\zeta_nx_n)^k$ satisfies $Lu_k=0$. Hence the equation $Lu=0$ has homogeneous polynomial solutions of arbitrary degree. This was a commonplace 19th century observation that we learned from Bruce Reznick \cite[pag. 180]{Rz96}. 
\end{remark}
\begin{notation}
Throughout this section we denote by $\ord_0(f)$ the order of vanishing at the origin of a function $f$, by which we mean the smallest degree of a non-zero term in the Taylor series of $f$ at $0$.
\end{notation}

We start by choosing for each $k\in\mathbb{N}$ a homogeneous polynomial of degree $k$ with complex coefficients $p_k\in\C[x_1,\dots,x_{n-1}]$. We will build a solution to 
\begin{equation}\label{sist}
\begin{cases}
Lu=0 \\
u=p_k \text{ on } x_n=0,
\end{cases}
\end{equation}
and call this solution $u_k$.
\begin{itemize}
\item It will follow from our construction that $\ord_0(u_k)=k$.
\item We will keep track of the domain of definition of $u_k$ and prove that (if the polynomials $p_k$ are chosen appropriately) it is independent of $k$.
\end{itemize}

\begin{remark}
We will later see that not all sequences of polynomials $p_k$ serve our purpose, and we will need to require more conditions on them. 
\end{remark}

\begin{lemma}\label{lemma0}
Let $L$ be an elliptic operator of order $m$ defined on an open set $\Omega\subset\R^n$. Then, letting $\beta=(0,\dots,0,m)$, we can write $L$ as
\begin{equation}\label{canonicalform}
L=D^{\beta}-\sum_{\substack{|\alpha|\leq m\\ \alpha_n<m}} a_{\alpha}D^{\alpha}.
\end{equation}
\end{lemma}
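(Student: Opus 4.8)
The plan is to extract a single consequence of ellipticity: the top-order coefficient attached to pure normal derivatives is nowhere zero, so after normalizing we may divide it out. Concretely, write $L=\sum_{|\alpha|\le m}a_\alpha D^\alpha$ with $a_\alpha\in C^\omega(\Omega)$ and set $\beta=(0,\dots,0,m)$. First I would evaluate the principal symbol at the covector $\xi=(0,\dots,0,1)$. For this choice, $\xi^\alpha=0$ whenever $|\alpha|=m$ and $\alpha\neq\beta$ (since then $\alpha_n<m$), while $\xi^\beta=1$; hence $\sum_{|\alpha|=m}a_\alpha(x)\xi^\alpha=a_\beta(x)$. By Definition \ref{defin}, the left-hand side is nonzero for every $x\in\Omega$, so $a_\beta(x)\neq 0$ on all of $\Omega$.

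Since $a_\beta\in C^\omega(\Omega)$ vanishes nowhere, its reciprocal $1/a_\beta$ is again real analytic on $\Omega$, and $a_\beta^{-1}L$ is an elliptic differential operator of order $m$ with real analytic coefficients whose $D^\beta$-coefficient equals $1$. Because everything downstream (Proposition \ref{prop} and its uses) concerns only the homogeneous equation $Lu=0$, which has the same solutions as $(a_\beta^{-1}L)u=0$, we may harmlessly replace $L$ by $a_\beta^{-1}L$ and thus assume from the start that $a_\beta\equiv 1$.

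It then remains to isolate the $D^\beta$ term from the rest. The index set $\{\alpha:|\alpha|\le m\}\setminus\{\beta\}$ coincides exactly with $\{\alpha:|\alpha|\le m,\ \alpha_n<m\}$: if $|\alpha|<m$ then $\alpha_n\le|\alpha|<m$, and if $|\alpha|=m$ with $\alpha\neq\beta$ then necessarily $\alpha_n<m$; conversely $\beta$ has $\beta_n=m$. Relabeling the remaining coefficients by $a_\alpha\rightsquigarrow -a_\alpha$ gives precisely \eqref{canonicalform}. I expect no genuine obstacle here; the only point requiring a word of care is justifying the normalization $a_\beta\equiv1$, i.e., that dividing by $a_\beta$ is legitimate for our purposes, which follows since we are only ever interested in the kernel of $L$.
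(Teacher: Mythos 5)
Your proof is correct and follows essentially the same route as the paper: evaluate the principal symbol at $\xi=(0,\dots,0,1)$ to conclude $a_\beta\neq 0$ on $\Omega$ from ellipticity, then divide by $a_\beta$ and rearrange. Your extra remarks on the index bookkeeping and on why the normalization $a_\beta\equiv 1$ is harmless (only the kernel of $L$ matters downstream) are accurate but not a different argument.
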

\begin{proof}
Let $L=\sum_{|\gamma|=m}a_{\gamma}D^{\gamma}+\dots$, where dots stand for lower order terms. Let 
$$p(x,\xi):=\sum_{|\gamma|=m}a_{\gamma}(x)\xi^{\gamma},\quad x\in\Omega,\xi\in\R^n.$$
Note that $a_{\beta}(x)\neq 0$ for all $x\in\Omega$. Indeed, if $a_{\beta}(\bar{x})=0$ for some $\bar{x}\in\Omega$, then $p(\bar{x},(0,\dots,0,1))=0$, thus contradicting the ellipticity of $L$. Since $a_{\beta}$ is non-vanishing in $\Omega$, we can divide by $a_{\beta}$ and rearrange the terms to obtain \eqref{canonicalform}.
\end{proof}

Back to solving \eqref{sist}. We first make the substitution $v=u-p_k$.
We now have to solve 
\begin{equation}\label{sistchange}
\begin{cases}
L(v+p_k)=0 \\
v=0 \text{ on } x_n=0.
\end{cases}
\end{equation}
If we write $L$ as in Lemma \ref{lemma0}, the equation $L(v+p_k)=0$ becomes
\begin{equation}\label{eqv}
D^{\beta}v=\sum_{\substack{|\alpha|\leq m\\ \alpha_n<m}}a_{\alpha}D^{\alpha}v-Lp_k.
\end{equation}
Since $p_k$ is homogeneous of order $k$ and $L$ is an operator of order $m$, then $$\ord_0(Lp_k)\geq\max\{0,k-m\}.$$

\begin{notation}
We denote by $\mathbb{D}_R$ a polydisc of multi-radius $R=(R_1,\dots,R_n)$ centered at the origin, that is, $\mathbb{D}_R=\{(x_1,\dots,x_n)\in\R^n, |x_j|<R_j\}$.
\end{notation} 
Let $\mathbb{D}_R$ be a polydisc such that for each $\alpha$ the Taylor series of $a_{\alpha}$ converges in $\mathbb{D}_R$. It follows immediately that the function $-Lp_k$ also admits a representation as a convergent power series in $\mathbb{D}_R$, for each $k$ and regardless of the choice of $p_k$.

The next lemma shows how we can solve \eqref{sistchange} by recursion.
\begin{lemma}\label{lemmanu}
Fix $k\in\mathbb{N}$. There exists a sequence of functions $\{v_{\nu}\}_{\nu\in\mathbb{N}}$ such that 
\begin{equation}\label{recurs}
\begin{cases}
D^{\beta}v_{\nu+1}=\sum_{\alpha}a_{\alpha}D^{\alpha}v_{\nu}-Lp_k \\
v_{\nu}=0 \text{ on } x_n=0.
\end{cases}
\end{equation}
Moreover, the following properties hold: \begin{itemize}
\item For each $\nu\in\mathbb{N}$, the function $v_{\nu}$ is a convergent power series in $\mathbb{D}_R.$
\item For $\nu\geq 1$, we have $\ord_0(v_{\nu})\geq k$.
\end{itemize}
\end{lemma}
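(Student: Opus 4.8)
The plan is to produce the sequence $\{v_\nu\}$ explicitly by an $m$-fold integration in the variable $x_n$, starting from the seed $v_0\equiv 0$. Let $J$ denote the antiderivative operator $(Jf)(x)=\int_0^{x_n} f(x_1,\dots,x_{n-1},t)\,dt$, so that $\partial_{x_n}(Jf)=f$, the trace $(Jf)|_{x_n=0}$ vanishes, and $\ord_0(Jf)=\ord_0(f)+1$ whenever $f\not\equiv 0$. The key point is that $J$ sends a power series convergent in $\mathbb{D}_R$ to a power series convergent in $\mathbb{D}_R$, since it only divides Taylor coefficients by positive integers; hence the same holds for $J^m$. Having defined $v_\nu$, I would set $v_{\nu+1}:=J^m\bigl(\sum_{|\alpha|\le m,\ \alpha_n<m}a_\alpha D^\alpha v_\nu-Lp_k\bigr)$. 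Then $D^\beta v_{\nu+1}=\partial_{x_n}^m v_{\nu+1}$ equals the right-hand side of the first equation in \eqref{recurs}, while $v_{\nu+1}|_{x_n=0}=0$ because every application of $J$ kills the trace on $x_n=0$; together with $v_0|_{x_n=0}=0$ this gives \eqref{recurs}.

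It remains to verify the two additional properties by induction on $\nu$. For convergence: $v_0$ is entire, and if $v_\nu$ is a power series convergent in $\mathbb{D}_R$, then each $D^\alpha v_\nu$ converges in $\mathbb{D}_R$ (term-by-term differentiation does not shrink the polydisc of convergence), each product $a_\alpha D^\alpha v_\nu$ converges in $\mathbb{D}_R$ by the Cauchy-product estimate $\sum_\varepsilon|c_\varepsilon||x|^\varepsilon\le\bigl(\sum_\gamma|a_\gamma||x|^\gamma\bigr)\bigl(\sum_\delta|b_\delta||x|^\delta\bigr)$ valid at every point of $\mathbb{D}_R$, and $Lp_k$ is a polynomial; since the sum over $\alpha$ is finite, the argument of $J^m$ converges in $\mathbb{D}_R$, and therefore so does $v_{\nu+1}$. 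Crucially, the polydisc $\mathbb{D}_R$ does not depend on $\nu$.

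For the order estimate: $v_1=J^m(-Lp_k)$, and since $p_k$ is homogeneous of degree $k$ and $L$ has order $m$ we have $\ord_0(Lp_k)\ge\max\{0,k-m\}$, so applying $J^m$ gives $\ord_0(v_1)\ge\max\{0,k-m\}+m\ge k$. If $\ord_0(v_\nu)\ge k$ for some $\nu\ge1$, then $\ord_0(D^\alpha v_\nu)\ge k-|\alpha|\ge k-m$ for $|\alpha|\le m$, hence $\ord_0(a_\alpha D^\alpha v_\nu)\ge k-m$; combined with $\ord_0(Lp_k)\ge k-m$, the argument of $J^m$ has order at least $k-m$ (and at least $0$), so $\ord_0(v_{\nu+1})\ge\max\{0,k-m\}+m\ge k$. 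This completes the induction.

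I do not expect a serious obstacle here. The construction is a standard Cauchy--Kovalevskaya-type recursion, and the facts it uses about formal power series --- closure under differentiation, multiplication, and antidifferentiation, together with the additivity of $\ord_0$ under these operations --- are all elementary. The single point that requires attention, and the reason the statement is formulated this way, is that the polydisc of convergence can be chosen once and for all, independently of $\nu$; this uniformity is precisely what will allow the subsequent lemma to pass to the limit $v=\lim_\nu v_\nu$ and deduce, by a majorant argument, that the resulting solution of \eqref{sistchange} is real analytic in a fixed neighborhood of the origin.
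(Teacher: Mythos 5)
Your construction is the same as the paper's: the operator $J^m$ you define is exactly the explicit formula $v_{\nu+1}(x)=\sum_\gamma c_\gamma x^{\gamma+\beta}\tfrac{\gamma_n!}{(\gamma_n+m)!}$ used there, and the inductive verification of convergence in a fixed $\mathbb{D}_R$ and of $\ord_0(v_\nu)\ge k$ proceeds identically. The proposal is correct and takes essentially the same route as the paper.
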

\begin{proof}
Let $\nu_{0}=0$ on $\mathbb{D}_R$. We look for solutions of 
\begin{equation}\label{nuzero}
\begin{cases}
D^{\beta}v_1=-Lp_k \\
v_1=0 \text{ on } x_n=0.
\end{cases}
\end{equation}
If $-Lp_k(x)=\sum_{\gamma}c_{\gamma}x^{\gamma}$ in $\mathbb{D}_R$, then we choose the solution to \eqref{nuzero} given by
\begin{equation}\label{nuuno}
v_1(x):=\sum_{\gamma}c_{\gamma}x^{\gamma+\beta}\frac{\gamma_n!}{(\gamma_n+m)!}.
\end{equation}
The series in \eqref{nuuno} is still convergent in $\mathbb{D}_R$. Moreover, $\ord_0(Lp_k)\geq\max\{0,k-m\}$, and therefore $\ord_0(v_1)\geq k$.

We now proceed by induction on $\nu$. Assume that the lemma is proved for $\nu$. We want $v_{\nu+1}$ to solve \eqref{recurs}. Let $\sum_{\gamma}c_{\gamma}x^{\gamma}$ be an expression for $\sum_{\alpha}a_{\alpha}D^{\alpha}v_{\nu}-Lp_k$ as a convergent power series in $\mathbb{D}_R$. Then let $$v_{\nu+1}(x)=\sum_{\gamma}c_{\gamma}x^{\gamma+\beta}\frac{\gamma_n!}{(\gamma_n+m)!}.$$ We see that $\ord_0(v_{\nu+1})\geq m+\max\{\ord_{0}(v_{\nu})-m, \ord_0(Lp_k)\}$. Applying the inductive hypothesis, we conclude that $\ord_0(v_{\nu+1})\geq k$.
\end{proof}
\begin{remark}
The sequence $v_{\nu}$ should really be called $v^k_{\nu}$, since there is one sequence for each $k$, or better for every polynomial $p_k$. Note that we have not yet put any restriction on the choice of polynomials. We will use the notation $v^k_{\nu}$ whenever we want to emphasize the dependence on $p_k$.
\end{remark}
With the sequence $v_{\nu}$ from Lemma \ref{lemmanu} available, the proof of Proposition \ref{prop} follows easily from the next lemma.

\begin{lemma}\label{tecn}
There exists an open neighborhood $U$ of $0$ in $\R^n$ and a choice of polynomials $p_k$ such that for each $k$ the sequence $v_{\nu}^k$ converges uniformly on $U$. 
\end{lemma}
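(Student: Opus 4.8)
\emph{The plan} is to recognise the recursion \eqref{recurs} as the Picard iteration for the Cauchy problem \eqref{sistchange}, which is non-characteristic along $\{x_n=0\}$ by Lemma~\ref{lemma0}, and to prove convergence of the iterates by the majorant method of Cauchy and Kovalevskaya, tracking the domain so as to see it is independent of $k$. Let $T$ be the operator implicit in \eqref{nuuno}, namely $T\big(\sum_\gamma c_\gamma x^\gamma\big):=\sum_\gamma c_\gamma\,\tfrac{\gamma_n!}{(\gamma_n+m)!}\,x^{\gamma+\beta}$; it is the right inverse of $D^\beta$ whose image vanishes to order $m$ in $x_n$. Then $v^k_1=-T(Lp_k)$ and, writing $S:=T\circ\big(\sum_\alpha a_\alpha D^\alpha\big)$, the recursion becomes $v^k_{\nu+1}=S v^k_\nu+v^k_1$ with $v^k_0=0$, so that $v^k_\nu=\sum_{j=0}^{\nu-1}S^j v^k_1$. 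Since every multi-index occurring in $L$ has $\alpha_n\le m-1$, the operator $\sum_\alpha a_\alpha D^\alpha$ lowers the order of vanishing in $x_n$ by at most $m-1$ while $T$ raises it by $m$; hence $S$ raises it by at least $1$, and $S^j v^k_1$ vanishes to order $\ge m+j$ in $x_n$. Consequently $v^k:=\sum_{j\ge0}S^j v^k_1$ is a well-defined formal power series (each monomial gets contributions from finitely many $j$), differing from $v^k_\nu$ by a series vanishing to order $\ge m+\nu$ in $x_n$. Thus the lemma reduces to showing that $v^k$ is the Taylor expansion of a function analytic on some neighbourhood $U$ of $0$ \emph{independent of $k$}, and that $v^k_\nu\to v^k$ uniformly on $U$.

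\emph{Majorant estimates.} I would fix a polydisc $\mathbb{D}_\rho\subset\mathbb{D}_R$ with $\rho<1$ on which each $a_\alpha$ is analytic, together with $M>0$ such that $a_\alpha\ll \Phi$ for all $\alpha$, where $\Phi:=M\big(1-(x_1+\dots+x_n)/\rho\big)^{-1}$ and $\ll$ denotes coefficientwise domination of Taylor series. As the ``appropriate choice'' of polynomials I would take $p_k$ homogeneous of degree $k$, \emph{not identically zero}, with all coefficients of modulus at most $1$ (the nonvanishing is what will be needed later, in the proof of Lemma~\ref{main lemma}; the normalisation just keeps the constants uniform). Because $\rho<1$, the factor $k^{|\alpha|}$ produced by differentiating a polynomial of degree $k$ is absorbed, so $Lp_k\ll C\,\Phi^2$ with $C$ independent of $k$ — only this overall constant feels $p_k$. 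Now set $z:=x_1+\dots+x_{n-1}$, $t:=x_n$, and for a power series $g=g(z,t)$ let $\widetilde{g}(x):=g(x_1+\dots+x_{n-1},x_n)$. Every majorant above is of the form $\widetilde{g}$; the substitution is a ring homomorphism, $D^\alpha\widetilde{g}=\widetilde{\partial_z^{|\alpha|-\alpha_n}\partial_t^{\alpha_n}g}$, and $T\widetilde{g}=\widetilde{T_{z,t}g}$ with $T_{z,t}$ the obvious two-variable analogue of $T$. Using these compatibilities together with the monotonicity of $\ll$ under $D^\alpha$, products and $T$, one checks by induction on $j$ that $S^j v^k_1\ll \widetilde{W_j}$, where $W_0:=T_{z,t}(C\,\Phi_0)$ with $\Phi_0:=M^2\big(1-(z+t)/\rho\big)^{-2}$ (so that $\widetilde{\Phi_0}=\Phi^2$), and $W_{j+1}:=T_{z,t}\big(M\big(1-(z+t)/\rho\big)^{-1}\sum_\alpha \partial_z^{|\alpha|-\alpha_n}\partial_t^{\alpha_n}W_j\big)$. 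Hence $v^k\ll\widetilde{W}$ with $W:=\sum_{j\ge0}W_j$, and the tails $\sum_{j\ge\nu}S^j v^k_1$ are dominated by $\sum_{j\ge\nu}\widetilde{W_j}$.

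\emph{The main obstacle and conclusion.} What remains, and is the heart of the matter, is the convergence of $W=\sum_j W_j$ on a fixed bidisc — precisely the Cauchy--Kovalevskaya estimate for the two-variable majorant Cauchy problem
\[
\partial_t^m W=\frac{M}{1-(z+t)/\rho}\sum_{\substack{|\alpha|\le m\\ \alpha_n<m}}\partial_z^{|\alpha|-\alpha_n}\partial_t^{\alpha_n}W+C\,\Phi_0,\qquad W \text{ vanishing to order } m \text{ in } t.
\]
This is delicate because $S$ is $T$ composed with an operator of order $m$ and the multiplier $\gamma_n!/(\gamma_n+m)!$ introduced by $T$ is bounded but not small, so no decay is visible term by term. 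Following H\"ormander, the resolution exploits the shape of the majorant: a derivative of $\big(1-(z+t)/\rho\big)^{-1}$ costs a factor $\sim 1/\rho$ but creates an extra power of $\big(1-(z+t)/\rho\big)^{-1}$, and this is compensated by the gain of one order in $t$ observed in the first paragraph. Concretely I would either estimate $S$ on the scale of Banach spaces of functions holomorphic on $\{|z+t|<s\}$, where it maps the $s$-space into the $s'$-space with norm $O\big((s-s')^{-m}\big)$, and sum the Neumann series by an Ovsyannikov-type telescoping of the radii; or, more in the spirit of the excerpt, produce an explicit majorant solution $W$ of the displayed problem by reducing it to a first-order ordinary differential equation in the single variable $z+t$ and checking $W_j\ll W$. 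Either route yields $\delta>0$, depending only on $m,M,\rho$ and hence not on $k$, such that $W$ converges on $\{|z|<\delta,\ |t|<\delta\}$; then $\widetilde{W}$, and therefore $v^k$, converges on $U:=\{|x_1|+\dots+|x_{n-1}|<\delta',\ |x_n|<\delta'\}$ for a slightly smaller $\delta'$, on which the tails of the majorant series tend to $0$, giving $v^k_\nu\to v^k$ uniformly on $U$. Finally, setting $u_k:=v^k+p_k$, one obtains a real analytic function on $U$ satisfying $Lu_k=0$ and $u_k=p_k$ on $\{x_n=0\}$, with $\ord_0(u_k)=\ord_0(p_k)=k$ since each $S^j v^k_1$ vanishes to order $\ge k$ at $0$ by Lemma~\ref{lemmanu}; this establishes Lemma~\ref{tecn}, and with it Proposition~\ref{prop}.
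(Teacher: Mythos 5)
You have correctly identified the structure: the recursion is a Picard iteration, the operator $S := T\circ\big(\sum_\alpha a_\alpha D^\alpha\big)$ raises the order of vanishing in $x_n$ by at least one (since $T$ gains $m$ while $\sum_\alpha a_\alpha D^\alpha$ loses at most $m-1$), so $v^k_\nu = \sum_{j<\nu}S^j v^k_1$ is a well-defined formal series, and the whole content of the lemma is the convergence of the Neumann series on a neighborhood independent of $k$. This is also the architecture of the paper's argument. However, the paragraph you label ``the main obstacle and conclusion'' is precisely where the proof stops: you set up the majorant Cauchy problem, correctly note that the factor $\gamma_n!/(\gamma_n+m)!$ supplied by $T$ is bounded but not small so that no term-by-term decay is visible, and then offer two possible completions (an Ovsyannikov scale of Banach spaces, or an explicit majorant ODE) without carrying out either. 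Since the quantitative estimate is the entire content of Lemma~\ref{tecn}, this is a genuine gap, not a routine verification.

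For comparison, the paper closes exactly this gap by an induction on $\nu$ for the differences $w^k_\nu := v^k_{\nu+1}-v^k_\nu$, proving (Lemma~\ref{lemmalungo}) the explicit bound $|D^\beta w^k_\nu(x)|\le C^{\nu+1}|x_n|^\nu d(x)^{-m\nu-1}$ with $d(x)=\prod_{j<n}(R_j-|x_j|)$. Two elementary estimates make the induction close: Lemma~\ref{l1} (integrating $m-j$ times in $x_n$ gains $|x_n|^{m-j}$ and divides by $\sim\nu^{m-j}$) and Lemma~\ref{l2} (each tangential derivative costs a factor $\sim e(m\nu+m)$ and raises the exponent of $d(x)^{-1}$ by one). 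The gain $\nu^{j-m}$ cancels the polynomial loss $(m\nu+m)^{m-j}$ up to a constant, and the remaining ratio $C|x_n|/d(x)^m$ is $<1$ on a fixed neighborhood. If you want to finish along your route, you would have to supply the analogue of this balance; reducing to an ODE in the single variable $z+t$, as written, does not obviously work because $T$ only integrates in $t$, so you likely need the standard anisotropic substitution $z+\lambda t$ with $\lambda$ large to make $T$ a contraction against the tangential derivatives. Finally, a small but useful simplification in the paper: rather than normalising the coefficients of $p_k$ and then showing $Lp_k\ll C\Phi^2$ uniformly, it normalises $p_k$ directly so that $\sup_{\overline{\mathbb D}_R}|Lp_k|\le 1$ for every $k$, which is the cleanest hypothesis to feed into the recursion.
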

\begin{proof}[Proof of Proposition \ref{prop}]
For each $k\in\mathbb{N}$, let $v_k:=\lim_{\nu\to\infty}v^k_{\nu}$. By Lemma \ref{tecn} and Lemma \ref{lemmanu}, each $v_k$ is a uniform limit of analytic functions in $U$. Hence $v_k$ is itself analytic in $U$ and moreover, for every multi-index $\alpha\in\mathbb{N}^n$, we have $$D^{\alpha}v_{k}=\lim_{\nu\to\infty}D^{\alpha}v^k_{\nu}.$$
If we then take $\lim_{\nu\to\infty}$ on both sides of the following equation
$$D^{\beta}v^k_{\nu+1}=\sum_{\alpha}a_{\alpha}D^{\alpha}v^k_{\nu}-Lp_k,$$
we see that $v_k$ solves \eqref{sistchange}. Tracing back our substitution, we let $u_k:=p_k+v_k$. Note that each $u_k$ is analytic in $U$ and solves \eqref{sist}. In particular, $Lu_k=0$, as required.
Now note that, for each multi-index $\alpha$ of length $|\alpha|<k$, we have
\begin{equation}\label{qend}
D^{\alpha}v_{k}(0)=\lim_{\nu\to\infty}D^{\alpha}v^k_{\nu}(0)=0.
\end{equation}
The last equality in \eqref{qend} follows from $\ord_0(v^k_{\nu})\geq k$ (Lemma \ref{lemmanu}). We conclude that each $v_k$ vanishes to order at least $k$ at $0$. Now consider $u_k=p_k+v_k$. Recall that $p_k$ is a homogeneous polynomial of degree $k$ in the variables $x_1,\dots,x_{n-1}$. By construction, $v_k=0$ for $x_n=0$. In particular, the Taylor series of $v_k$ at $0$ does not have any term not involving the variable $x_n$. Hence in the sum $v_k+p_k$ no cancellation occurs and $u_k$ vanishes to order exactly $k$ at $0$, as wanted. \end{proof}

The only thing left to prove is Lemma \ref{tecn}. We first state and prove two simple auxiliary lemmas from \cite{H}.

\begin{lemma}\label{l1}
Let $R>0$ and $g$ a $C^1$ function in one real variable $x$ defined for $|x|<R$. Assume that
$$|g'(x)|\leq |x|^a,\quad |x|<R, \,\,\,\text{ and }\,\,\, g(0)=0,$$
where $a\geq 0$. It follows that 
$$|g(x)|\leq\frac{ |x|^{a+1}}{a+1},\quad |x|<R.$$
\end{lemma}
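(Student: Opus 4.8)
The plan is to prove this by a direct integration estimate based on the fundamental theorem of calculus. Since $g$ is $C^1$ on $(-R,R)$ and $g(0)=0$, for any $x$ with $0\le x<R$ we can write
\begin{equation*}
g(x)=g(0)+\int_0^x g'(t)\,\mathrm{d}t=\int_0^x g'(t)\,\mathrm{d}t,
\end{equation*}
and therefore, using the hypothesis $|g'(t)|\le|t|^a$ on the whole segment of integration,
\begin{equation*}
|g(x)|\le\int_0^x|g'(t)|\,\mathrm{d}t\le\int_0^x t^a\,\mathrm{d}t=\frac{x^{a+1}}{a+1}.
\end{equation*}
For $-R<x<0$ the argument is symmetric: one writes $g(x)=-\int_x^0 g'(t)\,\mathrm{d}t$ and bounds $|g(x)|\le\int_x^0|t|^a\,\mathrm{d}t=|x|^{a+1}/(a+1)$; equivalently, one applies the previous case to $\tilde g(x):=g(-x)$, which is $C^1$ on $(-R,R)$, vanishes at $0$, and satisfies $|\tilde g'(x)|=|g'(-x)|\le|x|^a$. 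The case $x=0$ is trivial, since both sides vanish.

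I do not expect any genuine obstacle in this argument; it is a routine one-variable estimate. The only points deserving a moment's care are that the bound $|g'(t)|\le|t|^a$ is assumed for all $|t|<R$, so it is in force along the entire interval over which we integrate, and that the elementary integral $\int_0^x t^a\,\mathrm{d}t$ is convergent at the endpoint $0$ for every exponent $a\ge 0$ (including non-integer $a$), so no improper-integral subtlety arises. With these observations the claimed inequality follows immediately.
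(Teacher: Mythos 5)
Your argument is correct and is exactly the paper's proof, which simply states that the lemma follows from $g(x)=\int_0^x g'(t)\,dt$. You have merely spelled out the routine details (the triangle inequality under the integral and the symmetric treatment of $x<0$) that the paper leaves implicit.
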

\begin{proof}
The statement follows immediately from $g(x)=\int_0^{x}g'(t)dt.$
\end{proof}
\begin{lemma}\label{l2}
Assume that $g(x)=\sum_ja_jx^j, a_j\in\C$ is a convergent power series in $|x|<R$ and $$|g(x)|\leq (R-|x|)^{-a},\quad |x|<R,$$
where $a\geq 0$. It follows that
$$|g'(x)|\leq e(1+a)(R-|x|)^{-a-1},\quad |x|<R.$$
\end{lemma}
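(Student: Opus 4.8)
The plan is to derive the bound from the Cauchy estimate for the first derivative of a holomorphic function. Since the series $\sum_j a_j x^j$ converges for $|x|<R$, its radius of convergence is at least $R$; hence $g$ is in fact holomorphic on the complex disc $\{|x|<R\}$, with $g'(x)=\sum_{j\geq 1}j\,a_j x^{j-1}$ there, and the hypothesis $|g(x)|\leq (R-|x|)^{-a}$ is used for $x$ in this disc. Fix $x$ with $|x|<R$ and set $d:=R-|x|>0$.

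First I would apply Cauchy's integral formula for $g'$ over the circle $\{|\zeta-x|=r\}$, for an arbitrary radius $0<r<d$:
\[
g'(x)=\frac{1}{2\pi i}\int_{|\zeta-x|=r}\frac{g(\zeta)}{(\zeta-x)^{2}}\,d\zeta .
\]
On this circle $|\zeta|\leq|x|+r<R$, so $R-|\zeta|\geq d-r$ and the hypothesis gives $|g(\zeta)|\leq (R-|\zeta|)^{-a}\leq (d-r)^{-a}$. Bounding the integral by length times supremum of the integrand then yields $|g'(x)|\leq (d-r)^{-a}/r$ for every $r\in(0,d)$. It remains to choose $r$ well. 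For $a>0$ the right-hand side is minimized at $r=d/(a+1)$, where $d-r=ad/(a+1)$, and substituting gives
\[
|g'(x)|\leq\frac{(a+1)^{a+1}}{a^{a}\,d^{a+1}}=\frac{a+1}{d^{a+1}}\Bigl(1+\tfrac1a\Bigr)^{a}\leq\frac{e(a+1)}{d^{a+1}}=e(1+a)(R-|x|)^{-a-1},
\]
the last inequality being $(1+1/a)^{a}<e$, which follows from $\log(1+u)<u$ for $u>0$. The case $a=0$ is immediate from $|g'(x)|\leq 1/r$: letting $r\to d^{-}$ gives $|g'(x)|\leq 1/d\leq e\,(R-|x|)^{-1}$.

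I expect the argument itself to be entirely routine once it is set up as a Cauchy estimate; the only point that must not be overlooked is that holomorphy of $g$ on the full disc of radius $R$ is genuinely needed, i.e.\ the hypothesis has to be available on the complex disc (as in H\"ormander), not merely on a real interval --- indeed, for $a=0$ a high-frequency trigonometric polynomial on a short interval shows that a pointwise bound on $|g|$ along the reals alone cannot control $|g'|$. After that, the only bookkeeping is the optimal choice $r=d/(a+1)$ and the elementary estimate $(1+1/a)^{a}<e$.
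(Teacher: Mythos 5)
Your proof is correct and follows essentially the same route as the paper's: complexify $g$, apply Cauchy's formula on a circle of radius $r$ about the point, bound $R-|\zeta|\geq (R-|x|)-r$ there, and optimize with $r=(R-|x|)/(1+a)$ together with $(1+1/a)^a<e$. Your separate treatment of $a=0$ and your remark that the hypothesis must hold on the complex disc (not just the real interval) are both sound refinements of the same argument.
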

\begin{proof}
Complexify $g$ to a holomorphic function in $|\zeta|<R$. Let $0<\epsilon<\rho=R-|\zeta|$ and let $|\zeta_1-\zeta|<\epsilon$. Adding $\rho$ on both sides of $-\epsilon\leq |\zeta|-|\zeta_1|,$ we obtain $$\rho-\epsilon \leq \rho+|\zeta|-|\zeta_1|=R-|\zeta_1|.$$ Hence 
\begin{equation}\label{rho}
(R-|\zeta_1|)^{-a}\leq (\rho-\epsilon)^{-a}.
\end{equation}
Combining \eqref{rho} and the hypothesis, we have
$$|g(\zeta_1)|\leq (R-|\zeta_1|)^{-a} \leq (\rho-\epsilon)^{-a}.$$
By Cauchy's formula, we obtain
$$|g'(\zeta)|\leq (\rho-\epsilon)^{-a}\epsilon^{-1}.$$
Letting $\epsilon=\rho/(1+a)$, we get
$$|g'(\zeta)|\leq (1+a)(1+a^{-1})^{a}\rho^{-a-1}\leq e(1+a)\rho^{-a-1}.$$
\end{proof}

For each $k\in\mathbb{N}$, define the sequence of differences $w^k_{\nu}:=v^k_{\nu+1}-v^k_{\nu}$. Since the $v^k_{\nu}$ satisfy \eqref{recurs}, then for every $k$ we have
\begin{equation}\label{w}
D^{\beta}w^k_{\nu+1}=\sum_{\alpha}D^{\alpha}w^k_{\nu},\quad \nu=0,1,\dots
\end{equation}

\begin{remark}\label{remmy} Note that Lemma \ref{tecn} is proved if we can show that there exists a neighborhood $U$ of $0$ in $\mathbb{R}^n$ and a choice of polynomials $p_k$ such that for all $k$ the series $\sum_{\nu} |w^k_{\nu}(x)|$ converges uniformly in $U$.  
\end{remark}

Recall that $\mathbb{D}_R$ is a polydisc centered at $0$ of multi-radius $R=(R_1,\dots,R_n)$ such that all the functions $a_{\alpha}$ admit a representation as convergent power series in $\mathbb{D}_R$. From now on we assume, without loss of generality, that $R_j<1$ for all $j$.
\begin{notation}
We denote by $D^j_n$ the derivative $D^{\gamma}$, where $\gamma=(0,\dots,0,j)$.
\end{notation}

\begin{lemma}\label{lemmalungo}
There exists a constant $C$ and a choice of the polynomials $p_k$ such that the following estimate holds for every $k$:
\begin{equation}\label{estimate}
|D^{\beta}w^k_{\nu}(x)|\leq C^{\nu+1}|x_n|^{\nu} d(x)^{-m\nu-1},\quad x\in \mathbb{D}_R,\quad \nu=0,1,\dots
\end{equation}
Here $d(x)=\prod_{j=1}^{n-1}(R_j-|x_j|).$
\end{lemma}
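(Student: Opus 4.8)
The plan is to establish the estimate \eqref{estimate} by induction on $\nu$, with the polynomials $p_k$ chosen so that the base case $\nu=0$ holds uniformly in $k$. For the inductive step, the strategy is to integrate the recursion \eqref{w} in the $x_n$-variable $\nu$ times (using that $w^k_\nu$ and its first $m-1$ normal derivatives vanish on $x_n=0$, since each $v^k_\nu$ does) and to differentiate in the tangential variables $x_1,\dots,x_{n-1}$ at most $m$ times. The normal integrations are controlled by Lemma \ref{l1}, which converts a bound of the form $|x_n|^{a}$ on a derivative into a bound $|x_n|^{a+1}/(a+1)$ on the function, thus producing the factor $|x_n|^\nu$ and a favorable $1/\nu!$-type constant. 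The tangential differentiations are controlled by Lemma \ref{l2}, which converts a bound $(R_j-|x_j|)^{-a}$ into $e(1+a)(R_j-|x_j|)^{-a-1}$, thus degrading $d(x)^{-m\nu-1}$ to $d(x)^{-m(\nu+1)-1}$ while contributing only polynomially-in-$\nu$ constants that can be absorbed into $C^{\nu+1}$.

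First I would fix the choice of $p_k$. The coefficients $a_\alpha$ are bounded on $\mathbb{D}_R$, say by some constant $A$; the operator $\sum_{\alpha}a_\alpha D^\alpha$ has finitely many terms, each of order at most $m$ with $\alpha_n<m$. The term $-Lp_k$ in the $\nu=0$ level of the recursion (i.e. $v^k_1$, giving $w^k_0=v^k_1-v^k_0=v^k_1$) must be estimated, and this is where the size of $p_k$ enters: by \eqref{nuuno}, $D^\beta v^k_1 = -Lp_k$, and since $p_k$ is a homogeneous polynomial of degree $k$ in $x_1,\dots,x_{n-1}$, choosing the coefficients of $p_k$ small enough — for instance normalizing so that a suitable norm of $p_k$ is at most $\epsilon^k$ for a small $\epsilon$ depending on $R$ and $A$ but not on $k$ — forces $|D^\beta w^k_0(x)| = |Lp_k(x)| \le C\, d(x)^{-1}$ uniformly in $k$ on $\mathbb{D}_R$ (crudely bounding the monomials of $p_k$ on the polydisc). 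This is consistent with the later use of the $p_k$ in the proof of Lemma \ref{main lemma}, where only $p_k(\bar x_1,\dots,\bar x_{n-1})\ne 0$ is needed, a condition unaffected by rescaling.

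For the inductive step, assume \eqref{estimate} for $\nu$. From \eqref{w}, $D^\beta w^k_{\nu+1} = \sum_\alpha a_\alpha D^\alpha w^k_\nu$, where each $D^\alpha$ with $\alpha_n<m$ can be written as $D_n^{\alpha_n}$ composed with tangential derivatives $D_1^{\alpha_1}\cdots D_{n-1}^{\alpha_{n-1}}$. Starting from the bound on $D^\beta w^k_\nu = D_n^m w^k_\nu$, I integrate $m-\alpha_n \ge 1$ times in $x_n$ (each integration using $g(0)=0$ for the relevant normal derivative, via Lemma \ref{l1}, gaining a power of $|x_n|$ and a $1/(\text{something})$ factor) to reach $D_n^{\alpha_n}w^k_\nu$, then differentiate $\le m$ times tangentially using Lemma \ref{l2} in each of the variables $x_1,\dots,x_{n-1}$ separately (treating $d(x)^{-1}=\prod(R_j-|x_j|)^{-1}$ and applying the one-variable lemma to one factor at a time, freezing the others). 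Multiplying by $\|a_\alpha\|_\infty\le A$ and summing the finitely many $\alpha$ gives $|D^\beta w^k_{\nu+1}(x)| \le C'\cdot(\text{polynomial in }\nu)\cdot C^{\nu+1}|x_n|^{\nu+1} d(x)^{-m(\nu+1)-1}$, and enlarging $C$ at the outset to dominate $C'\cdot(\text{poly in }\nu)$ for all $\nu$ closes the induction. The main obstacle is bookkeeping: carefully tracking the powers of $|x_n|$ and $d(x)$ through the alternating normal integrations and tangential differentiations, making sure the exponent arithmetic ($-m\nu-1 \to -m(\nu+1)-1$ after at most $m$ tangential derivatives, combined with $m-\alpha_n$ integrations and $\alpha_n$ un-integrated normal derivatives giving exactly one net extra power of $|x_n|$) works out uniformly, and verifying that all $\nu$-dependent constants from Lemmas \ref{l1} and \ref{l2} grow at most geometrically so they can be absorbed into $C^{\nu+1}$.
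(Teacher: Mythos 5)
Your proposal is correct and follows essentially the same route as the paper: induction on $\nu$, with the base case secured by rescaling the $p_k$ so that $|Lp_k|$ is uniformly bounded on $\mathbb{D}_R$ (the paper normalizes so $|Lp_k|\le 1$, which is slightly simpler than your $\epsilon^k$ decay but equivalent for the purpose), and with the inductive step carried out by integrating $m-\alpha_n$ times in $x_n$ via Lemma~\ref{l1} and differentiating up to $m$ times tangentially via Lemma~\ref{l2}, exactly as you describe. The only caution is phrasing: "integrate the recursion $\nu$ times" in your overview reads oddly, but your detailed description correctly does $m-\alpha_n$ normal integrations per inductive step, and you also correctly flag that the applicability of Lemma~\ref{l1} rests on $D_n^j w^k_\nu$ vanishing on $x_n=0$ for $0\le j<m$, a fact the paper leaves implicit in the explicit power-series formula for $v^k_{\nu+1}$.
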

\begin{proof}
We will prove the lemma by induction on $\nu$. 

We choose the polynomials $p_k$ so that $|Lp_k|\leq 1$ in $\mathbb{D}_R$ for all $k$. This is easily achieved by replacing each $p_k$ by $p_k/M$, where $M=\max\{|Lp_k(x)|,x\in\overline{\mathbb{D}}_R\}$. 

Recall that $w^k_0=v_1^k-v_0^k$, $v_0^k\equiv 0$ and $D^{\beta}v_1^k=-Lp_k.$ Hence 
\begin{equation}\label{almostdone}
|D^{\beta}w^k_0(x)|=|Lp_k(x)|\leq 1,\quad x\in\mathbb{D}_R.
\end{equation}
Note that, for each $i$, $$ \frac{R_i}{R_i-|x_i|}\geq 1,\quad x\in\mathbb{D}_R.$$
Letting $C:=\prod_{i=1}^{n-1} R_i$, we then have $Cd(x)^{-1}\geq 1.$ Combining with \eqref{almostdone}, we get
$$|D^{\beta}w^k_0(x)|\leq C d(x)^{-1},\quad x\in \mathbb{D}_R, \quad k\in\mathbb{N},$$
that is, \eqref{estimate} holds for $\nu=0$.

We now want to prove that $C$ can be chosen sufficiently large so that \eqref{estimate} can be proved by recursion. Assume that \eqref{estimate} holds for $\nu$ and for every $k\in\mathbb{N}$. Repeated application of Lemma \ref{l1} gives, for $0\leq j<m$ and $k\in\mathbb{N}$,
\begin{equation}\label{stimaderivatan}
|D^{j}_n w^k_{\nu}(x)|\leq C^{\nu+1}|x_n|^{\nu+(m-j)}d(x)^{-m\nu-1}\Bigg(\prod_{i=0}^{m-j-1}\frac{1}{\nu+i}\Bigg),\quad x\in\mathbb{D}_R.
\end{equation}
Note that $|x_n|<1$ and 
$$\frac{1}{\nu+i}\leq \frac{1}{\nu},\quad \text{for } \nu\in\mathbb{N},\quad  i=0,\dots,m-j-1.$$
Hence \eqref{stimaderivatan} becomes
\begin{equation}\label{stimameglion}
|D^{j}_n w^k_{\nu}(x)|\leq C^{\nu+1}|x_n|^{\nu+1}d(x)^{-m\nu-1}\nu^{j-m},\quad x\in\mathbb{D}_R.
\end{equation}
To conclude the inductive step, we need to prove an estimate on $|D^{\beta}w^k_{\nu+1}(x)|$ for $x\in\mathbb{D}_R$. In virtue of \eqref{w}, it is enough to have an estimate on each $|D^{\alpha}w^k_{\nu}(x)|$, where $|\alpha|\leq m$, $\alpha_n<m$. Assume that $\alpha_n=j<m.$ A repeated application of Lemma \ref{l2} to \eqref{stimameglion} yields, for every $k\in\mathbb{N}$,
\begin{equation}\label{stimader}
|D^{\alpha}w^k_{\nu}(x)|\leq C^{\nu+1}|x_n|^{\nu+1}\nu^{j-m}e^{|\alpha|-j}d(x)^{-m\nu-1-|\alpha|}\Bigg(\prod_{i=1}^{|\alpha|-j}(m\nu+1+i)\Bigg),\quad x\in\mathbb{D}_R.
\end{equation}
Here we have used that $(R_i-|x_i|)<1$ for all $i$, and therefore $$(R_i-|x_i|)^{-a}<(R_i-|x_i|)^{-a-1} \,\,\,\text{for}\,\,\, a\geq 0.$$ 
Equation \eqref{stimader} implies, for $x\in\mathbb{D}_R$ and $k\in\mathbb{N}$,
\begin{equation}\label{superestimate}
\begin{split}
|D^{\alpha}w^k_{\nu}(x)|&\leq C^{\nu+1}|x_n|^{\nu+1}\nu^{j-m}e^{m-j}d(x)^{-m\nu-1-(m-j)}(m\nu+m+1)^{m-j}\\
&\leq C^{\nu+1}|x_n|^{\nu+1}e^{m-j}d(x)^{-m(\nu+1)-1}\bigg(m+\frac{m}{\nu}+\frac{1}{\nu}\bigg)^{m-j}\\
&\leq C^{\nu+1}|x_n|^{\nu+1}e^{m}d(x)^{-m(\nu+1)-1}(2m+1)^m.
\end{split}
\end{equation}
Let $A$ be a constant such that $\sum_{\alpha}|a_{\alpha}|\leq A$ in $\mathbb{D}_R$. Recalling that
\begin{equation*}
D^{\beta}w^k_{\nu+1}=\sum_{\alpha}D^{\alpha}w^k_{\nu}
\end{equation*} 
and combining with \eqref{superestimate}, we see that \eqref{estimate} holds for $\nu+1$ provided that $C$ is  chosen large enough so that $C\geq Ae^m(2m+1)^m$.
\end{proof}

\begin{remark}
In Lemma \ref{lemmalungo} we could have employed any sequence of homogeneous polynomials $p_k$ for which there exists a constant $M$ such that $|Lp_k|\leq M$ in $\mathbb{D}_R$ for all $k$.
\end{remark}

\begin{proof}[Proof of Lemma \ref{tecn}]
As noted in Remark \ref{remmy}, it is enough to prove that there exists a neighborhood $U$ of $0$ in $\mathbb{R}^n$ and a choice of polynomials $p_k$ such that for all $k$ the series $\sum_{\nu} |w^k_{\nu}(x)|$ converges uniformly in $U$. We exploit the estimate proved in Lemma \ref{lemmalungo}. Consider the equation \eqref{stimameglion} for $j=0$. Let $U$ be the neighborhood of $0$ where $C|x_n|/d(x)^m<1.$ Then the series $\sum_{\nu} |w^k_{\nu}(x)|$ converges uniformly in $U$ for every $k\in\mathbb{N}$.
\end{proof} 
  \section{A local obstruction for $\bar\partial$ on complex manifolds}\label{complex}
Let $n\geq 1$, and consider the space $\R^{2n}$ with coordinates $\{x_1,\dots,x_n,y_1,\dots,y_n\}$. We identify $\R^{2n}$ with $\C^n$ in the usual way, by letting $z_j=x_j+iy_j$. Recall the definition of the derivatives $\partial_{\bar{z}_j}:=\partial_{x_j}+i\partial_{y_j}$ and $\partial_{z_{j}}:=\partial_{x_j}-i\partial_{y_j}$. The Cauchy-Riemann operator $\bar\partial$ is defined on smooth functions $f$ in $\C^n$ by $$\bar\partial f:=\sum_{j=1}^n \partial_{\bar{z}_j}f \,d\bar{z}_j.$$ The following is an immediate consequence of Theorem \ref{nonsolv}.
\begin{corollary}\label{c1}
There exists a germ of a smooth function $g$ in $\C$ that is flat at $0$ and such that every local solution $u$ to $\bar\partial u=g\,d\bar{z}$ vanishes to finite order at 0.
\end{corollary}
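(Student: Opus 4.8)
The plan is to deduce Corollary \ref{c1} directly from Theorem \ref{nonsolv}, applied to the Cauchy--Riemann operator on $\C\cong\R^2$. First I would record that $\partial_{\bar z}=\partial_x+i\partial_y$ is a first-order differential operator on $\R^2$ with constant, hence real analytic, coefficients, and that it is elliptic in the sense of Definition \ref{defin}: its principal symbol at $\xi=(\xi_1,\xi_2)\in\R^2$ is $\xi_1+i\xi_2$, which vanishes only at $\xi=0$. So the hypotheses of Theorem \ref{nonsolv} are met with $n=2$, $p=0$, and $L=\partial_{\bar z}$.

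Next I would invoke Theorem \ref{nonsolv} to obtain a germ $g\in c_f$, flat at $0$, such that the scalar equation $\partial_{\bar z}u=g$ has no smooth germ solution $u$ that is flat at $0$. In one complex variable one has $\bar\partial u=(\partial_{\bar z}u)\,d\bar z$, and $d\bar z$ is a nowhere-vanishing $(0,1)$-form; therefore the equation $\bar\partial u=g\,d\bar z$ is literally equivalent to the scalar PDE $\partial_{\bar z}u=g$. Consequently no local solution $u$ of $\bar\partial u=g\,d\bar z$ can be flat at $0$.

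Finally I would close the argument by recalling that a smooth germ is either flat at $0$ or vanishes there to some finite order (equivalently, its Taylor series at $0$ either vanishes identically or has a nonzero term of least degree). Since, by the previous step, no solution of $\bar\partial u=g\,d\bar z$ is flat at $0$, every local solution $u$ vanishes to finite order at $0$, which is exactly the assertion of Corollary \ref{c1}; that such solutions exist at all follows from the ellipticity of $\partial_{\bar z}$, as noted after Theorem \ref{nonsolv}.

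I do not expect a genuine obstacle here: the statement is essentially a translation of Theorem \ref{nonsolv} into complex notation, and the only points warranting a line of verification are the ellipticity of $\partial_{\bar z}$ and the reduction of the single-form equation $\bar\partial u=g\,d\bar z$ to the scalar equation $\partial_{\bar z}u=g$.
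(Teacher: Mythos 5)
Your argument is correct and is exactly the route the paper intends: the paper labels Corollary \ref{c1} an ``immediate consequence'' of Theorem \ref{nonsolv}, and the only content is precisely what you spell out, namely that $\partial_{\bar z}=\partial_x+i\partial_y$ is elliptic with constant coefficients on $\R^2$, that $\bar\partial u=g\,d\bar z$ is the same scalar equation as $\partial_{\bar z}u=g$, and that a smooth germ which is not flat vanishes to finite order. Nothing further is needed.
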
 
\begin{notation}
Recall that for a smooth function $f$ we denote by $\widetilde{f}$ its Taylor series at $0$.
\end{notation}
Corollary \ref{c1} can be easily generalized to several complex variables as follows.
 \begin{corollary}\label{c2}
For every $n$, there exists a $\bar\partial$-closed smooth $(0,1)$ form $\varphi=\sum_{j=1}^n\varphi_j\,d\bar{z}_j$ in $\mathbb{C}^n$ defined in a neighborhood of $0$ such that:
\begin{itemize}
\item The functions $\varphi_j$ are all flat at $0$.
\item Every local solution to $\bar\partial u=\varphi$ vanishes to finite order at $0$.
\end{itemize}
 \end{corollary}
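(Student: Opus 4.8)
The plan is to reduce Corollary \ref{c2} directly to Corollary \ref{c1} by building the desired $(0,1)$-form out of a single bad function in the last complex variable. Concretely, let $g$ be the germ of a smooth function on $\C$ that is flat at $0$ and for which every local solution of $\bar\partial u = g\, d\bar z$ vanishes to finite order at $0$, as furnished by Corollary \ref{c1}. I would then set
\begin{equation*}
\varphi := g(z_n)\, d\bar z_n, \qquad \text{i.e.} \qquad \varphi_n(z) := g(z_n),\quad \varphi_j \equiv 0 \ \text{for}\ j < n,
\end{equation*}
viewing $g(z_n)$ as a smooth germ on $\C^n$ that depends only on the coordinate $z_n$. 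The first bullet is immediate: $\varphi_n$ is flat at $0\in\C^n$ because $g$ is flat at $0\in\C$ (all partial derivatives of $\varphi_n$ are pulled back from derivatives of $g$), and the remaining $\varphi_j$ vanish identically. For $\bar\partial$-closedness, recall that $\bar\partial\varphi = 0$ amounts to the compatibility conditions $\partial_{\bar z_k}\varphi_j = \partial_{\bar z_j}\varphi_k$ for all $j,k$; since $\varphi_n = g(z_n)$ is independent of $z_1,\dots,z_{n-1}$ we have $\partial_{\bar z_j}\varphi_n = 0$ for $j<n$, and all other terms vanish trivially, so $\varphi$ is $\bar\partial$-closed.

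It remains to verify the second bullet: every local solution $u$ of $\bar\partial u = \varphi$ near $0\in\C^n$ vanishes to finite order at $0$. Suppose for contradiction that such a $u$ is flat at $0\in\C^n$. The equation $\bar\partial u = \varphi$ in particular forces $\partial_{\bar z_n} u = g(z_n)$ (and $\partial_{\bar z_j} u = 0$ for $j<n$, though I do not need that). Now restrict to the complex line $\{z_1=\dots=z_{n-1}=0\}\subset\C^n$: define $v(w) := u(0,\dots,0,w)$ for $w$ in a neighborhood of $0\in\C$. Then $v$ is a smooth germ on $\C$, it is flat at $0$ because $u$ is flat at $0\in\C^n$ (differentiating $u$ along $z_n$ only and setting the other variables to $0$ gives zero jets), and the chain rule gives $\partial_{\bar w} v(w) = (\partial_{\bar z_n} u)(0,\dots,0,w) = g(w)$. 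Hence $v$ is a local solution of $\bar\partial v = g\, d\bar w$ on $\C$ that is flat at $0$ — contradicting the defining property of $g$ from Corollary \ref{c1}. Therefore no flat solution $u$ exists, i.e. every local solution of $\bar\partial u = \varphi$ vanishes to finite order at $0$.

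I do not anticipate a serious obstacle here: the only points requiring (brief) care are checking $\bar\partial$-closedness of the degenerate form $\varphi$ and confirming that flatness is inherited both when extending $g$ trivially to $\C^n$ and when restricting $u$ back to the coordinate line. Both are routine consequences of the chain rule and the coordinate independence of $\varphi_n$. One could alternatively phrase the whole argument as an induction on $n$, with base case $n=1$ being exactly Corollary \ref{c1}, but the one-step reduction above is cleaner and makes the dependence on Corollary \ref{c1} transparent.
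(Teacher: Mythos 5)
Your argument is correct, and it takes a genuinely different (and somewhat cleaner) route than the paper. The paper sets $\varphi_j=g(z_j)$ for \emph{every} $j$ and then invokes the structure of solutions: any local $u$ with $\bar\partial u=\varphi$ decomposes as $u=u_1(z_1)+\dots+u_n(z_n)+h$ with $h$ holomorphic and $\partial_{\bar z_j}u_j=g(z_j)$, after which a Taylor-series comparison along the line $(z_1,0,\dots,0)$ forces $u_1$ to be holomorphic if $u$ were flat, contradicting $\partial_{\bar z_1}u_1=g\not\equiv 0$. You instead concentrate the data in the last variable, $\varphi=g(z_n)\,d\bar z_n$, and reduce directly to $n=1$ by restricting a hypothetical flat solution to the coordinate line $\{z_1=\dots=z_{n-1}=0\}$; the chain rule gives $\partial_{\bar w}v=g(w)$ for $v(w)=u(0,\dots,0,w)$, and flatness of $v$ at $0$ follows from flatness of $u$, immediately contradicting Corollary \ref{c1}. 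What your approach buys: it avoids the (true but unjustified in the paper) decomposition of solutions and a small amount of bookkeeping with the constants $u_j(0)$; the reduction is a one-line restriction. What the paper's approach buys: a form whose components are \emph{all} non-trivially flat functions, not just one, which some readers may find a more convincing illustration of the phenomenon, though the statement of the corollary does not require that. Both proofs are correct; yours is the more economical.
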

 \begin{proof}
For every $j\in\{1,\dots,n\}$, let $\varphi_j=g(z_j)$, where $g$ is as in Corollary \ref{c1}. Note that every solution of $\bar\partial u=\varphi$ is of the form $u=u_1(z_1)+\dots+u_n(z_n)+h(z_1,\dots,z_n)$, where $h$ is holomorphic and each smooth function $u_j(z_j)$ is a solution to $\bar\partial u_j(z_j)=g(z_j)d\bar{z}_j$. By our choice of $g$, every $u_j$ vanishes to finite order at $0$. We now want to conclude that $u$ itself vanishes to finite order at $0$. Assume by contradiction that the Taylor series $\widetilde{u}$ of $u$ at $0$ is identically zero. Hence \begin{equation}\label{Taylorsomma}
0=\widetilde{u}=\widetilde{u_1}+\dots+\widetilde{u_n}+\widetilde{h}.
\end{equation}
Evaluating \eqref{Taylorsomma} at $(z_1,0,\dots,0)$ we obtain 
\begin{equation}\label{Tayloreq}
\widetilde{u_1}(z_1)+\widetilde{h}(z_1,0,\dots,0)=0.
\end{equation} Since $u_1$ vanishes to finite order at $0$ and $h$ is holomorphic at $0$, then \eqref{Tayloreq} implies that $u_1$ is also holomorphic at $0$. This is absurd, since $\partial_{\bar{z}_1}u_1=g(z_1)$, and $g\not\equiv 0$.
 \end{proof}
  
\begin{notation} In this section we denote by $c_f$ the ring of germs of smooth functions in $\C^n$ that are flat at $0$. Moreover, we write $c_{f}^{0,1}$ for the complex vector space of germs of smooth $(0,1)$ forms whose components are flat at $0$. 
\end{notation}

Corollary \ref{c2} shows that there exist closed forms $\varphi\in c^{0,1}_f$ such that there is no function $u\in c_f$ with $\bar\partial u=\varphi$. The aim of the rest of this section is to give a characterization for such elements $\varphi$ presenting the ``flat non-solvability" property. To this end, we introduce below the concept of formally holomorphic functions. We then prove in Theorem \ref{charact} that $\bar\partial u=\varphi$ is non-solvable in the flat category if and only if $\varphi$ is the image under $\bar\partial$ of a formally holomorphic function.
  
  \begin{notation}
  When $z=(z_1,\dots,z_n)\in\C^n$ and $\alpha=(\alpha_1,\dots,\alpha_n)$ is a multi-index, we write $z^{\alpha}$ for the product $\prod_{j=1}^nz_j^{\alpha_j}$ and $\partial_{z}^{\alpha}$ for $\partial_{z_1}^{\alpha_1}\partial_{z_2}^{\alpha_2}\dots \partial_{z_n}^{\alpha_n}$.
  \end{notation}
  
\begin{definition}
We say that a smooth function $f$ on $\C^n$ is {\em formally holomorphic} at a point $p\in\C^n$ if there exist constants $c_{\alpha}\in\mathbb{C}$ such that the Taylor series of $f$ at $p$ is given by
\begin{equation}\label{Tay}
\sum_{\alpha}c_{\alpha}(z-p)^{\alpha}.
\end{equation}
In other words, in the Taylor series expansion of $f$ at $p$ the barred variables $\bar{z}_j$ do not appear.
\end{definition}

\begin{remark} Borel's Lemma \ref{Borel} ensures that for each choice of coefficients $c_{\alpha}$ there exists a formally holomorphic function $f$ at $p$ whose Taylor series at $p$ is given by \eqref{Tay}. Note that if \eqref{Tay} converges in a neighborhood of $p$, then $f$ is holomorphic at $p$. 
\end{remark}

As usual, since our interest is local, we will assume that $p$ is the origin. We denote by $\mathcal{H}_0$ the subring of $C^{\infty}_0$ consisting of the germs of smooth functions that are formally holomorphic at $0$.
\begin{lemma}
If $f$ is formally holomorphic at $0$, then the components of $\bar\partial f$ are flat at $0$. In other words, $f\in\mathcal{H}_0$ implies $\bar\partial f\in c_{f}^{0,1}$.
\end{lemma}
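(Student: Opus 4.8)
The plan is to reduce the statement to the single observation that the Taylor-series homomorphism $\sim$ of \eqref{maptilda} intertwines $\partial_{\bar z_j}$ with the corresponding formal operator on power series, and that this formal operator annihilates every holomorphic monomial. Recall from the discussion around \eqref{maptilda} that a germ $h\in C^{\infty}_0$ is flat at $0$ if and only if $\widetilde{h}=0$. Hence it suffices to prove that $\widetilde{\partial_{\bar z_j}f}=0$ for each $j\in\{1,\dots,n\}$: once this is known, every component $\partial_{\bar z_j}f$ of $\bar\partial f$ lies in $c_f$, which is exactly the assertion $\bar\partial f\in c_f^{0,1}$.

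First I would record that $\sim$ commutes with first-order partial differentiation: for any $g\in C^{\infty}_0$ one has $\widetilde{\partial_{x_j}g}=\partial_{x_j}\widetilde{g}$ and $\widetilde{\partial_{y_j}g}=\partial_{y_j}\widetilde{g}$, where on the right-hand sides $\partial_{x_j}$ and $\partial_{y_j}$ denote the termwise formal derivations of $\C[[x_1,\dots,x_n,y_1,\dots,y_n]]$. This is immediate from the fact that the coefficients of $\widetilde{g}$ are the normalized derivatives of $g$ at $0$. Since by definition $\partial_{\bar z_j}=\partial_{x_j}+i\partial_{y_j}$, it follows that $\widetilde{\partial_{\bar z_j}f}=(\partial_{x_j}+i\partial_{y_j})\widetilde{f}$, namely the formal operator applied to the power series $\widetilde{f}$.

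Now I would invoke the hypothesis that $f$ is formally holomorphic at $0$: by definition $\widetilde{f}=\sum_{\alpha}c_{\alpha}z^{\alpha}$, where $z_k=x_k+iy_k$ is regarded as an element of $\C[[x_1,\dots,x_n,y_1,\dots,y_n]]$. A direct computation gives, for all $j,k$,
\[
(\partial_{x_j}+i\partial_{y_j})z_k=\partial_{x_j}(x_k+iy_k)+i\,\partial_{y_j}(x_k+iy_k)=\delta_{jk}+i(i\delta_{jk})=0.
\]
By the Leibniz rule for the formal derivation $\partial_{x_j}+i\partial_{y_j}$, this forces $(\partial_{x_j}+i\partial_{y_j})z^{\alpha}=0$ for every multi-index $\alpha$, hence $(\partial_{x_j}+i\partial_{y_j})\widetilde{f}=0$. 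Combined with the previous paragraph, $\widetilde{\partial_{\bar z_j}f}=0$ for each $j$, which completes the proof. The only step requiring any care is the interchange of $\sim$ with differentiation; everything else is formal bookkeeping, and, as the preceding remark notes, Borel's Lemma \ref{Borel} plays no role here.
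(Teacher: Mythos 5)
Your proof is correct, and it takes a genuinely cleaner, more algebraic route than the paper's. The paper fixes an arbitrary derivative $\partial_z^\gamma\partial_{\bar z}^\beta$ of order $m$, writes $f$ via Taylor's theorem as a holomorphic polynomial of degree $\leq m+1$ plus a remainder $\eta$ vanishing to order $\geq m+2$, observes that $\bar\partial$ kills the polynomial part, and then evaluates the derivative of $\partial_{\bar z_j}\eta$ at $0$ to conclude it vanishes. Your argument instead works entirely at the level of the formal power series ring: you observe that the Taylor homomorphism $\sim$ intertwines $\partial_{\bar z_j}$ with the formal derivation $\partial_{x_j}+i\partial_{y_j}$ on $\C[[x,y]]$, and that this derivation annihilates every $z^\alpha$ by the Leibniz rule since it annihilates each $z_k$. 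This recasts the lemma as a one-line algebraic identity, $\widetilde{\partial_{\bar z_j}f}=(\partial_{x_j}+i\partial_{y_j})\widetilde f=0$, rather than a derivative-by-derivative check. Both proofs hinge on the same underlying fact (holomorphic monomials lie in the kernel of $\bar\partial$), but yours makes the structural reason transparent and avoids the truncation argument; the paper's version is more elementary in that it never needs to state the intertwining property of $\sim$ with differentiation explicitly. One small point worth keeping in mind if you write this up: the termwise application of the derivation to the infinite sum $\sum_\alpha c_\alpha z^\alpha$ is legitimate because the derivation lowers degree by one, so each homogeneous component of the output depends on only finitely many input terms — you may want to say this in a sentence so the passage to infinite sums is not glossed over.
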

\begin{proof}
Consider $\partial_{z}^{\gamma}\partial_{\bar{z}}^{\beta}$ with $|\gamma|+|\beta|=m$. Since $f$ is formally holomorphic, we can write $$f=\sum_{|\alpha|\leq m+1}a_{\alpha}z^{\alpha}+\eta,$$ where $\eta$ is a smooth function vanishing at $0$ to order at least $m+2$. Then $\bar\partial f=\bar\partial\eta$, that is, $\partial_{\bar{z}_j}f=\partial_{\bar{z}_j}\eta$ for $j=1,\dots,n$. All these identities are intended in the ring of germs $C^{\infty}_0$. We thus have, for each $j$, $$\partial_{z}^{\gamma}\partial_{\bar{z}}^{\beta}(\partial_{\bar{z}_j}f)(0)=\partial_{z}^{\gamma}\partial_{\bar{z}}^{\beta}(\partial_{\bar{z}_j}\eta)(0)=0,$$ where the last equality follows from the hypothesis on the order of vanishing of $\eta$. Since the same argument can be repeated for any derivative $\partial_{z}^{\gamma}\partial_{\bar{z}}^{\beta}$, we conclude that each function $\partial_{\bar{z}_j}f$ is flat at $0$, and therefore $\bar\partial f\in c_{f}^{0,1}$.
\end{proof}

The next theorem is the main result of this section.
\begin{theorem}\label{charact}
Let $\varphi\in c_{f}^{0,1}$ be a $\bar\partial$-closed $(0,1)$ form, $\varphi\not\equiv 0$. The following are equivalent:
\begin{enumerate}
\item There exists no element $u\in c_f$ such that $\bar\partial u=\varphi$.
\item $\varphi\in\bar{\partial}\mathcal{H}_0.$
\end{enumerate} 
\end{theorem}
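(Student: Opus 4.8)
The plan is to prove the two implications separately, with the substantive content lying in $(1)\Rightarrow(2)$. For $(2)\Rightarrow(1)$, suppose $\varphi=\bar\partial f$ for some $f\in\mathcal H_0$, and assume for contradiction that there is $u\in c_f$ with $\bar\partial u=\varphi$. Then $\bar\partial(f-u)=0$, so $f-u$ is a distribution (indeed smooth germ) annihilated by the elliptic operator $\bar\partial$ with real analytic (constant) coefficients; by Theorem \ref{Pet}, $f-u$ is real analytic at $0$. Since $u\in c_f$, we have $\widetilde{f-u}=\widetilde f$, which is a formal power series in the $z_j$ only; but $f-u$ being analytic means $\widetilde f$ converges near $0$, so $f-u$ is holomorphic at $0$. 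Hence $\varphi=\bar\partial f=\bar\partial(f-u)+\bar\partial u=\bar\partial u$ has $\varphi=\bar\partial(\text{holomorphic})=0$ — wait, more carefully: $f-u$ holomorphic gives $\bar\partial(f-u)=0$ trivially (already known), so instead I argue: $u=f-(f-u)$ where $f\in\mathcal H_0$ and $f-u$ is holomorphic hence in $\mathcal H_0$, so $u\in\mathcal H_0$; but $u\in c_f$ and $u\in\mathcal H_0$ forces $\widetilde u=0$ with $\widetilde u$ the Taylor series, and since $u$ is a genuine smooth germ this is consistent — the contradiction comes from $\varphi\not\equiv 0$: we have $\varphi=\bar\partial u$ with $u$ both flat and formally holomorphic, and one shows directly that any finite jet of $\bar\partial u$ then vanishes to all orders, forcing $\widetilde\varphi=0$; combined with the conclusion below that $\varphi$ itself must vanish, or simply: $u\in c_f$ flat means $\bar\partial u$ is flat (clear), that is consistent, so the real contradiction is: $f-u$ holomorphic and $u$ flat means $\widetilde f=\widetilde{f-u}$ converges, so $f-u$ and $f$ have the same convergent Taylor series, hence $f$ is holomorphic at $0$, hence $\varphi=\bar\partial f=0$, contradicting $\varphi\not\equiv0$.

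For $(1)\Rightarrow(2)$, I argue by contraposition: assuming $\varphi\notin\bar\partial\mathcal H_0$, I must produce $u\in c_f$ with $\bar\partial u=\varphi$. First, since $\varphi$ is $\bar\partial$-closed and smooth near $0$, the (smooth) $\bar\partial$-Poincaré lemma (or ellipticity of $\bar\partial\oplus\partial^*$, i.e.\ Theorem-type regularity for $\bar\partial$) gives a smooth germ $w$ with $\bar\partial w=\varphi$ on a neighborhood of $0$. The components of $\varphi$ are flat, so $\widetilde\varphi=0$, which means the formal power series $\widetilde w$ is formally $\bar\partial$-closed, hence (the formal $\bar\partial$-Poincaré lemma over $\C[[z,\bar z]]$) $\widetilde w=\widetilde h + (\text{formal power series in the }z_j\text{ only})$ — more precisely, there is a formal power series $P\in\C[[z_1,\dots,z_n]]$ and a formal $q$ with $\widetilde w = q$ and $\bar\partial q$ formally zero means $q$ has $\bar\partial$-primitive; the upshot I want is: $\widetilde w$ differs from a formally holomorphic series by the Taylor series of a flat-data-solvable piece. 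Concretely: by Borel (Lemma \ref{Borel}) pick $H\in C^\infty_0$ with $\widetilde H$ equal to the ``holomorphic part'' of $\widetilde w$; then $w-H$ has Taylor series with no purely-$z$ terms, and one checks $\bar\partial(w-H)=\varphi-\bar\partial H$. Now $\bar\partial H\in c_f^{0,1}$ since $H\in\mathcal H_0$ (previous lemma), so $\psi:=\varphi-\bar\partial H$ is $\bar\partial$-closed, flat, and has a primitive $w-H$ whose Taylor series is ``purely barred-involving.''

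The key step — and the main obstacle — is then to show $\psi\in\bar\partial c_f$, i.e.\ that $\psi$ has a \emph{flat} primitive. Here is where I would either invoke a flat-solvability statement or build it: because $\widetilde\psi=0$ and $\psi=\bar\partial(w-H)$ with $\widetilde{w-H}$ containing only monomials in which some $\bar z_j$ appears, I want to modify $w-H$ by a holomorphic germ to kill its Taylor series entirely. The obstruction to doing so is precisely the failure to match $\widetilde{w-H}$ by a holomorphic series — but $\widetilde{w-H}$ has no purely-holomorphic monomials, so the only holomorphic series matching it is $0$, and $\widetilde{w-H}$ need not vanish. This is exactly the $n=1$ versus $n\ge2$ subtlety and the content of Corollary \ref{c2}. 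So the real claim to prove is: if $\psi\in c_f^{0,1}$ is $\bar\partial$-closed and $\psi\notin\bar\partial\mathcal H_0$, then $\psi\in\bar\partial c_f$. Equivalently, every $\bar\partial$-closed $\psi\in c_f^{0,1}$ is either in $\bar\partial\mathcal H_0$ or in $\bar\partial c_f$ (these being the two ``types'' of primitive, distinguished by whether the canonical primitive is analytic/formally-holomorphic or flat). I expect the cleanest route is: take any smooth primitive $w_0$ of $\psi$; its Taylor series $\widetilde{w_0}$ is formally $\bar\partial$-closed hence (formal $\bar\partial$-Poincaré, which holds in $n\ge1$ for the formal de Rham-type complex) equals $\bar\partial(\text{something})$ plus a formally holomorphic series $P$; subtract a Borel realization of $P$ to get a primitive $w_1$ with $\widetilde{w_1}$ = that ``something'' — and if $P=0$ then $w_1\in c_f$ and we are in case $\psi\in\bar\partial c_f$, done; if $P\ne0$, then $\psi=\bar\partial(\text{Borel realization of }P)=\bar\partial H'$ with $H'\in\mathcal H_0$, so $\psi\in\bar\partial\mathcal H_0$, and this case cannot occur under our contrapositive hypothesis — hence $P=0$ and we have produced the flat primitive. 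The one genuinely technical lemma needed is the \emph{formal} Dolbeault–Poincaré lemma: a formal $(0,1)$-form over $\C[[z,\bar z]]$ that is formally $\bar\partial$-closed is formally $\bar\partial$-exact modulo a formally holomorphic correction in the function slot; this is an exercise in monomial bookkeeping (integrate in $\bar z_n$ first, etc.) and is where I would spend the writing effort.
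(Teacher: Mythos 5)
Your $(2)\Rightarrow(1)$ direction follows the paper's argument: assume a flat solution $u$ exists, observe $f-u$ is holomorphic by analytic hypoellipticity of $\bar\partial$, hence $\widetilde{f}=\widetilde{f-u}$ converges, and derive a contradiction from the divergence of $\widetilde f$. (Your final line ``$f$ and $f-u$ have the same convergent Taylor series, hence $f$ is holomorphic'' is a non-sequitur as phrased, since two smooth germs sharing a convergent Taylor series need not both be analytic; the paper instead asserts directly that a non-holomorphic $f\in\mathcal{H}_0$ has divergent Taylor series.)

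Where you go off the rails is $(1)\Rightarrow(2)$. You set up a contrapositive argument and then invoke a ``formal Dolbeault--Poincar\'e lemma'' to decompose $\widetilde w$ into ``$\bar\partial(\text{something})$ plus a formally holomorphic series $P$,'' realize $P$ via Borel, and then worry about whether the residual piece has a flat primitive. This is a category error: $\widetilde w$ is a formal \emph{function}, not a formal $(0,1)$-form, and ``$\partial_{\bar z_j}\widetilde w=0$ for all $j$'' simply says $\widetilde w\in\C[[z_1,\dots,z_n]]$ --- there is nothing to integrate and no residual piece. The paper's $(1)\Rightarrow(2)$ argument is a two-line observation that short-circuits your whole construction: take any smooth local primitive $f$ of $\varphi$; since each $\partial_{\bar z_j}f=\varphi_j$ is flat, any Taylor coefficient of $f$ at a monomial $z^\beta\bar z^\gamma$ with $|\gamma|\geq 1$ is (up to a constant) a derivative $\partial_z^\beta\partial_{\bar z}^{\gamma-e_j}(\partial_{\bar z_j}f)(0)=0$, so $\widetilde f$ contains no barred variables and $f\in\mathcal{H}_0$. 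That is the entire argument (and, incidentally, it does not use hypothesis $(1)$ at all). Your contrapositive framing, the appeal to Borel, and the unproven ``formal Dolbeault--Poincar\'e'' lemma are all unnecessary, and the step where you conclude $w-H$ has a Taylor series ``with no purely-$z$ terms'' while still possibly nonzero cannot occur: with $\widetilde H=\widetilde w$, the difference $w-H$ is flat, and there is no remaining case to handle.
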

\begin{proof}

First assume that $(1)$ holds and let $f\in C^{\infty}_0$ be a smooth local solution to $\bar\partial f=\varphi$. Note that such a solution always exists by the $\bar\partial$-Poincar\'e Lemma. By $(1)$ we have that $f$ vanishes to finite order at $0$. We want to prove that $f\in\mathcal{H}_0$. Assume by contradiction that the Taylor series expansion of $f$ at $0$ contains a term of the form $a_{\beta\gamma}z^{\beta}\bar{z}^{\gamma}$, where $|\gamma|\geq 1$ and $a_{\beta\gamma}\in\C\setminus\{0\}$. It follows that there exists a derivative $\partial^{\beta}_{z}\partial^{\delta}_{\bar{z}}$ with $|\delta|=|\beta|-1$ and an index $j\in\{1,\dots,n\}$ such that 
\begin{equation}\label{flattt}
\partial^{\beta}_z\partial^{\delta}_{\bar{z}}(\partial_{\bar{z}_{j}} f)(0)\neq 0.
\end{equation} 
Recall that $\bar\partial f=\varphi$ and that all the components of the form $\varphi$ are flat at $0$. Hence \eqref{flattt} leads to a contradiction.

Conversely, assume that $(2)$ holds, that is, $\varphi=\bar\partial f$ for some $f\in\mathcal{H}_0$. Since $\varphi\not\equiv 0$, then $f$ is not holomorphic at $0$, and its Taylor series $\widetilde{f}$ is divergent in any neighborhood of $0$. Assume by contradiction that there exists $u\in c_f$ such that $\bar\partial u=g$. Then $\bar\partial(f-u)=0$, so that $f-u$ is holomorphic at $0$. In particular, the Taylor series $\widetilde{f-u}$ converges in a neighborhood of $0$. This is a contradiction, since $u\in c_f$ implies $\widetilde{f-u}=\widetilde{f}$.  
 \end{proof}

 \begin{figure}[h]
\centering
 \begin{tikzpicture}[mycircle/.style={draw,circle,minimum size=3.8cm}]
\node at (-4.5,-1.4) {$\bar\partial$}; 
\node at (0,-2.3) {$cc_f^{0,1}$}; 
\node at (-4.5,4.7) {$\bar\partial$}; 
\node at (-0.1,-1) {$\bar\partial c_f$}; 
\node at (0,0.9) {$\bar\partial \mathcal{H}_0$}; 
\node at (-10,2.7) {$\mathcal{O}_0$}; 
\node at (-10,-2.3) {$c_f$}; 
\node at (-10,5) {$\mathcal{H}_0$}; 
\draw[black] (-10,0)circle (1.9cm); 
\draw[black,fill=gray, fill opacity = 0.2] (-10,5)circle (1.9cm); 
\tkzDefPoint(-10,1.9){E};
\node at (-10,1.55) {$0$};
\draw[fill] (E) circle [radius=0.1]  ;
\draw[-{Latex[length=3mm]}](-8,5) to[out=0,in=130] (-0.5,1.2);
\draw[-{Latex[length=3mm]}](-8,-1) to (-0.75,-1);
\draw [fill=gray, fill opacity = 0.2] let \n1={-10+1.9*cos(-72}, \n2={5+1.9*sin(-72)}, \n3={-10+1.9*cos(252}, \n4={5+1.9*sin(252)}  in
(E)--(\n1,\n2) ([shift={(-72:1.9)}]-10,5) arc (-72:-108:1.9cm)--(E);
      \node[mycircle,black](hdm){};
      \draw[black,fill=gray, fill opacity=0.2, looseness=1.5] (-1.9,0) to[out=70,in =110] (0,0) to [out=-70,in=-110](1.9,0) arc (0:180:1.9cm);
     
   \end{tikzpicture} 
   \caption{The failure of the $\bar\partial$-Poincar\'e lemma for flat germs.} \label{fig1}
 \end{figure}
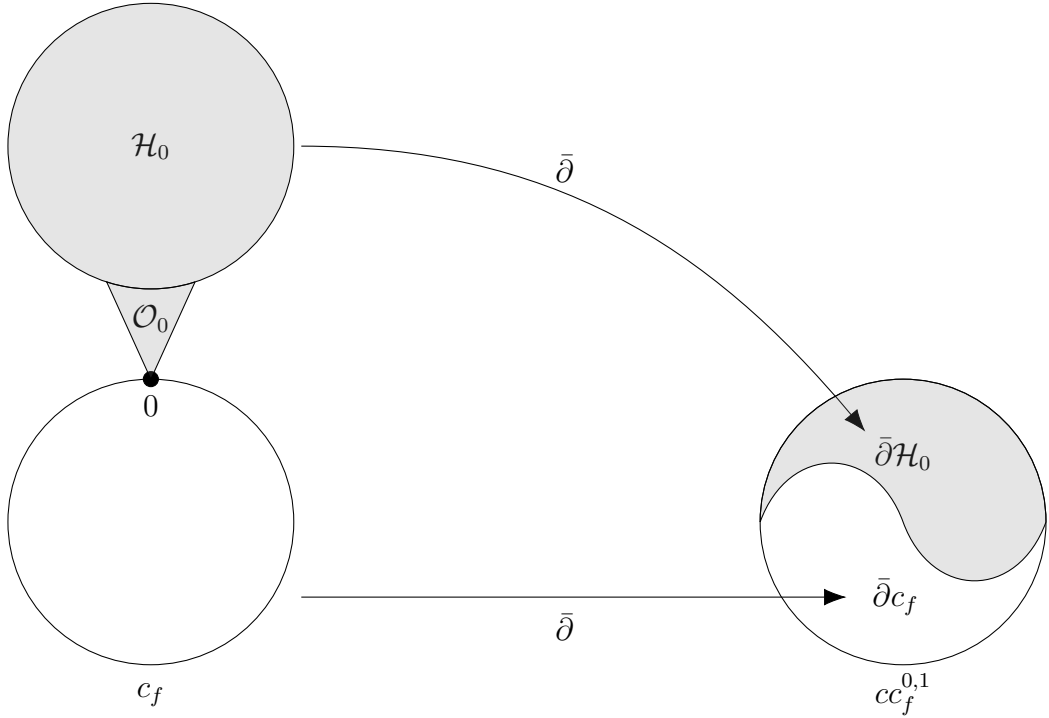

 Figure \ref{fig1} provides a simplified illustration of the phenomenon discussed above. The two discs at the bottom of the picture represent respectively the ring $c_f$ of flat germs in $\C^n$ and the complex vector space $cc_f^{0,1}$ of all $\bar\partial$-closed, smooth $(0,1)$ forms with flat components in $\C^n$. Theorem \ref{charact} shows that the map 
 \begin{equation}\label{dibarnonsurj}
 \bar\partial\colon c_f\to cc_f^{0,1}
 \end{equation}
  is not surjective. The same proposition also shows that the elements of $cc_f^{0,1}$ without a preimage in $c_f$ are precisely those in the set $\bar\partial\mathcal{H}_0$. In other words, the map \eqref{dibarnonsurj} becomes surjective if we enlarge the domain to include the ring of germs of formally harmonic functions $\mathcal{H}_0$. Hence $$\bar\partial \colon c_f\cup \mathcal{H}_0\to cc_f^{0,1}$$ is surjective. Note that the ring $\mathcal{H}_0$ corresponds to the shaded region on the left side of Figure \ref{fig1}. The intersection $c_f\cap \mathcal{H}_0$ consists of the zero function only. The ring $\mathcal{H}_0$ contains as a subring the collection of holomorphic germs, denoted as $\mathcal{O}_0$. For every $g\in \mathcal{O}_0$ we have $\bar\partial g=0$, where $0$ is the $(0,1)$ form whose components are identically zero. In Figure \ref{fig1} such form is identified with the curve between the sets $\bar\partial\mathcal{H}_0$ and $\bar\partial c_f$.
 
 \begin{remark} All the statements of this section are local, and therefore hold for the $\bar\partial$ operator on a general complex manifold.
 \end{remark}
 
 We close this section with some remarks on the real case. Let $c_f$ denote the ring of germs of flat functions at $0$ on $\R^n$, $n\geq 1$. Let $cc_f^1$ be the complex vector space of germs at $0$ of closed smooth one forms with flat components on $\R^n$. The ``flat non-solvability" phenomenon does not occur for the real differential \begin{equation}\label{realdiff}
 d\colon c_f\to cc_f^1.
 \end{equation} That is, the map \eqref{realdiff} is always surjective. Indeed, let $f=\sum_j f_jdx_j$ be a closed one form with flat components $f_j\in c_f$. Let $U$ be an open convex neighborhood of $0$ where all the functions $f_j$ are defined. By the standard Poincar\'e Lemma, there exists a function $u$ defined in $U$ such that $u(0)=0$ and
\begin{equation}
\partial_{x_j} u=f_j \quad j=1,\dots, n.
\end{equation}
 It is easily seen that such $u$ is flat at $0$, that is, $u\in c_f$. 
 
As a last remark, we point out that the difference in behavior between the real and the complex case is rooted in the fact that the operator $\bar\partial$, unlike $d$, has an infinite dimensional nullspace. 
 
 \section{Further Consequences}\label{Sec3}
 \subsection{Corollaries to Theorem \ref{nonsolv}}Let $L$ be an elliptic differential operator with real analytic coefficients defined on some open neighborhood $\Omega$ of $0$ in $\R^n$, where $n\geq 2$. Here we present some easy consequences of Theorem \ref{nonsolv}.
 \begin{corollary}\label{cinj}
The following hold:
\begin{enumerate}
\item The linear map $L\colon c_f\rightarrow c_f$ is injective but not surjective.
\item The induced map on the quotient spaces $L\colon C^{\infty}_0/c_{f}\rightarrow C^{\infty}_0/c_f$ 
is surjective but not injective.
\end{enumerate}
\end{corollary}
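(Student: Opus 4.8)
\textit{Proof proposal.} The plan is to dispatch the four assertions one at a time, each time reducing to one of the results already established: analytic hypoellipticity (Theorem \ref{Pet}) will give injectivity of $L$ on $c_f$, the Main Theorem (Theorem \ref{nonsolv}) will give the two failures, and the classical $C^{\infty}$ local solvability of elliptic operators (recalled in the introduction) will give surjectivity on the quotient. First, though, I would record that both maps make sense: if $u\in c_f$ then $D^{\alpha}u\in c_f$ for every $\alpha$ and multiplication by a real analytic (indeed, any smooth) function preserves flatness, so $L(c_f)\subseteq c_f$; and since $L(u-v)\in c_f$ whenever $u-v\in c_f$, the operator descends to a well-defined linear map on $C^{\infty}_0/c_f$.

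For part $(1)$, injectivity: suppose $u\in c_f$ and $Lu=0$ near $0$. The constant function $0$ is real analytic, so Theorem \ref{Pet} forces $u\in C^{\omega}_0$; but a real analytic germ that is flat at $0$ has vanishing Taylor series and is therefore the zero germ, so $u=0$. The failure of surjectivity of $L\colon c_f\to c_f$ is exactly the statement of Theorem \ref{nonsolv}: the flat germ $g$ produced there is not in the image. For part $(2)$, surjectivity of the induced map: given $[f]\in C^{\infty}_0/c_f$ with representative $f\in C^{\infty}_0$, local solvability of the elliptic operator $L$ provides $u\in C^{\infty}_0$ with $Lu=f$, whence $L[u]=[f]$. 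Finally, for the failure of injectivity of the induced map, take $G\in C^{\infty}_0$ and $g\in c_f$ as in Lemma \ref{main lemma}: then $LG=g\in c_f$, so $L[G]=[g]=[0]$, while $\widetilde{G}$ is nowhere convergent and in particular nonzero, so $G\notin c_f$ and $[G]\neq 0$. (Alternatively one can invoke Theorem \ref{nonsolv} to get $g\in c_f$ with no flat solution, solve $Lu=g$ smoothly by local solvability, and observe $u\notin c_f$; this $[u]$ is a nonzero element of the kernel.)

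I do not expect a genuine obstacle here, since every ingredient is in hand; the only point requiring care is to keep the two nontrivial inputs cleanly separated — analytic hypoellipticity for the injectivity of $L|_{c_f}$, and the Main Theorem together with mere $C^{\infty}$ solvability for the other three assertions — so that no circular reasoning is introduced.
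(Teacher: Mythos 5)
Your argument is correct and follows essentially the same route as the paper: analytic hypoellipticity (Theorem \ref{Pet}) for injectivity on $c_f$, Theorem \ref{nonsolv} for the failure of surjectivity, classical $C^\infty$ solvability for surjectivity on the quotient, and the germ $g$ from Theorem \ref{nonsolv} (or equivalently the pair $G,g$ from Lemma \ref{main lemma}) for the non-injectivity of the induced map. The only addition is your explicit check that $L(c_f)\subseteq c_f$ so the maps are well defined, which the paper leaves implicit.
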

 \begin{proof}
{\em (1)} Assume that $u\in c_f$ is such that $Lu=0$. By Theorem \ref{Pet}, $u$ is real analytic at $0$, and hence $u\equiv 0$ near $0$. This proves that $L\colon c_f\rightarrow c_f$ is injective. The failure of surjectivity was proved in Theorem \ref{nonsolv}. 
\newline 
{\em (2)} Surjectivity follows from the classical existence theory \cite{Ni}. To show that injectivity fails, it is enough to consider $g\in c_f$ as in Theorem \ref{nonsolv}. In fact, for every $u\in C^{\infty}_0$ such that $Lu=g$, we have $u\notin c_f$. 
 \end{proof}
 \begin{corollary}
For every integer $N$ there exists a smooth germ $u\notin c_f$ such that $L^ku\notin c_f$ for $k=0,\dots,N$, but $L^{N+1}u\in c_f$.

\end{corollary}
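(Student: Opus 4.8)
The plan is to obtain $u$ by solving $Lv=w$ ``backwards'' $N$ times, starting from the function $G$ produced in Lemma \ref{main lemma}, and then to observe that flatness of any iterate $L^{k}u$ would force flatness of $L^{N}u$.

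First I would isolate the two ingredients that make this work. On the one hand, $L$ maps $c_{f}$ into itself: if $f$ is flat at $0$ then so is every derivative $D^{\alpha}f$, and multiplying by the real analytic coefficients $a_{\alpha}$ preserves flatness (by the Leibniz rule, every derivative of $a_{\alpha}D^{\alpha}f$ vanishes at $0$), so $Lf\in c_{f}$ whenever $f\in c_{f}$. On the other hand, by the classical local existence theory for elliptic operators (see \cite{Ni}, as already used in Corollary \ref{cinj}(2)), the map $L\colon C^{\infty}_{0}\to C^{\infty}_{0}$ is surjective.

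Next I would take $G\in C^{\infty}_{0}$ and $g:=L(G)\in c_{f}$ as in Lemma \ref{main lemma}, and recall that the Taylor series $\widetilde{G}$ is divergent in every neighborhood of $0$; in particular $\widetilde{G}\neq 0$, hence $G\notin c_{f}$. Using surjectivity of $L$ on $C^{\infty}_{0}$, I would choose germs $u^{(1)},\dots,u^{(N)}\in C^{\infty}_{0}$ with $Lu^{(1)}=G$ and $Lu^{(j+1)}=u^{(j)}$ for $1\leq j\leq N-1$, and set $u:=u^{(N)}$. Then $L^{N}u=G$ and $L^{N+1}u=L(G)=g\in c_{f}$.

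Finally I would verify the non-flatness of the intermediate iterates. Since $\widetilde{L^{N}u}=\widetilde{G}\neq 0$, we have $L^{N}u\notin c_{f}$. If $L^{k}u\in c_{f}$ for some $0\leq k\leq N$, then applying $L$ a total of $N-k$ times and using that $L$ preserves $c_{f}$ would give $L^{N}u=L^{N-k}(L^{k}u)\in c_{f}$, a contradiction; hence $L^{k}u\notin c_{f}$ for every $k=0,\dots,N$, and in particular $u\notin c_{f}$. The only mild point requiring care is that each backward solve may shrink the neighborhood on which $u^{(j)}$ is defined, but only finitely many steps are performed, so this is harmless. I do not expect a genuine obstacle here: the statement is essentially a bookkeeping consequence of Lemma \ref{main lemma} combined with local solvability of $L$, the one conceptual step being the ``propagation upward'' of flatness through the iterates $L^{k}u$.
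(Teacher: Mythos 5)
Your proposal is correct and follows essentially the same route as the paper, which simply takes $u$ with $L^{N+1}u=g$ for the $g$ of Theorem~\ref{nonsolv} and leaves the verification implicit. Your variant, solving $L^{N}u=G$ and reading off $G\notin c_f$ from the divergence of $\widetilde{G}$, is a cosmetic rearrangement of the same ideas (surjectivity of $L$, the fact that $L$ preserves $c_f$, and the flat non-solvability coming from Lemma~\ref{main lemma}/Theorem~\ref{nonsolv}), and you are right that the finitely many backward solves cause no domain issues.
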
  
 \begin{proof}
 Let $u\in C^{\infty}_0$ be such that $L^{N+1}u=g$, where $g\in c_f$ is as in Theorem \ref{nonsolv}.  
\end{proof} 
\begin{corollary}\label{cin}  
  For each $k\in\mathbb{N}$ there is a strict inclusion $L^{k+1} c_f\subset L^{k} c_f$.
  \end{corollary}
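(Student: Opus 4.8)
The plan is to reduce the statement entirely to the two facts about $L$ acting on $c_f$ that have already been recorded, namely that $L\colon c_f\to c_f$ is injective but not surjective (Corollary \ref{cinj}(1), which itself rests on Theorem \ref{Pet} and Theorem \ref{nonsolv}). First I would dispose of the inclusion itself, which is purely formal: since $L$ carries flat germs to flat germs we have $Lc_f\subseteq c_f$, and applying the operator $L^k$ to both sides gives $L^{k+1}c_f=L^k(Lc_f)\subseteq L^k(c_f)=L^kc_f$. So the only content is strictness.

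For strictness I would argue by contradiction. Suppose $L^{k+1}c_f=L^kc_f$ for some $k\in\mathbb{N}$; note that when $k=0$ this reads $Lc_f=c_f$, which is already excluded by Corollary \ref{cinj}(1), so assume $k\geq 1$. The key observation is that $L\colon c_f\to c_f$ is injective, hence so is the iterate $L^k\colon c_f\to c_f$, being a composition of injective maps. Now rewrite the assumed equality as $L^k(Lc_f)=L^k(c_f)$, an equality of two subsets of the domain $c_f$ on which $L^k$ is injective. Since an injective map separates subsets of its domain — that is, $L^k(A)=L^k(B)$ with $A,B\subseteq c_f$ forces $A=B$ — we conclude $Lc_f=c_f$, i.e. $L\colon c_f\to c_f$ is surjective. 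This contradicts the non-surjectivity part of Corollary \ref{cinj}(1) (equivalently Theorem \ref{nonsolv}): the germ $g\in c_f$ produced there satisfies $g\notin Lc_f$. Hence $L^{k+1}c_f\subsetneq L^kc_f$ for every $k$.

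There is essentially no analytic difficulty here; the whole point is the bookkeeping that lets one reduce the general $k$ to the already-known case $k=0$. The single step that deserves a moment's care is the cancellation of $L^k$: one must observe that both $Lc_f$ and $c_f$ sit inside the common domain $c_f$ on which $L^k$ is injective, so that injectivity genuinely licenses passing from $L^k(Lc_f)=L^k(c_f)$ to $Lc_f=c_f$. Everything else is immediate from the results quoted above.
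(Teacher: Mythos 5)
Your proposal is correct and rests on exactly the same two facts the paper uses (injectivity of $L$, hence of $L^k$, on $c_f$, plus non-surjectivity of $L$ on $c_f$); the only difference is presentational, as the paper exhibits the explicit element $h=L^kg$ lying in $L^kc_f\setminus L^{k+1}c_f$, while you phrase the same cancellation of $L^k$ as a set-theoretic contradiction.
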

  
  \begin{proof} For $k\in\mathbb{N}$, let $h=L^kg$, where $g\in c_f$ is as in Theorem \ref{nonsolv}. Assume by contradiction that $h\in L^{k+1}c_f$. Then there exists $u\in c_f$ such that $L^{k+1}u=h$. Since $L$ is injective on $c_f$, this implies that $Lu=g$, which is absurd by the choice of $g$. Hence $h\notin L^{k+1}c_f$, and the inclusion $L^{k+1}c_f\subset L^kc_f$ is strict. 
  \end{proof}
By Corollary \ref{cin} there is a strictly decreasing chain of $\C$-vector spaces
$$c_f\supset L c_f\supset L^2 c_f\supset L^3 c_f\supset \dots $$
 
 We now consider the $L$-invariant subspace $K$ of $c_f$ defined by $K:=\bigcap_{k\in\mathbb{N}}L^kc_f$. 
 \begin{proposition}
 Either $K=\{0\}$ or $K$ is an infinite dimensional $\C$-vector space.
 \end{proposition}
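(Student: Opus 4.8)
The plan is to argue by contradiction, converting a hypothetical nonzero finite-dimensional $K$ into a nonzero flat germ that solves a homogeneous elliptic equation with real analytic coefficients, in violation of analytic hypoellipticity (Theorem \ref{Pet}).

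First I would record that $K$ is genuinely $L$-invariant, which is what lets us speak of $L|_K$. If $u\in K$, then for every $k$ we may write $u=L^kw_k$ with $w_k\in c_f$, so $Lu=L^{k+1}w_k\in L^{k+1}c_f$; thus $Lu\in\bigcap_{k\ge1}L^kc_f$. Since $L$ preserves flatness we have $Lc_f\subseteq c_f$, so $\bigcap_{k\ge1}L^kc_f\subseteq c_f=L^0c_f$ and hence equals $K$. Therefore $Lu\in K$.

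Now suppose, for contradiction, that $K\neq\{0\}$ is finite-dimensional. Then $L|_K$ is a linear endomorphism of a nonzero finite-dimensional complex vector space, hence has an eigenvalue $\lambda\in\C$; choose $v\in K\setminus\{0\}$ with $(L-\lambda)v=0$. The top-order part of $L-\lambda$ coincides with that of $L$, so $L-\lambda$ is elliptic, and its coefficients are still real analytic. Applying Theorem \ref{Pet} to $(L-\lambda)v=0$, whose right-hand side is the real analytic function $0$, shows that $v$ is real analytic near $0$. But $v\in K\subseteq c_f$ is flat at $0$, and the only real analytic germ flat at $0$ is the zero germ; hence $v=0$, a contradiction. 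This proves the dichotomy.

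The only genuine input is this last use of analytic hypoellipticity for the perturbed operator $L-\lambda$; the step to watch is simply the (immediate) observation that a constant shift leaves $L$ elliptic with real analytic coefficients, so that Theorem \ref{Pet} still applies. If one prefers not to invoke finite-dimensionality so directly, an equivalent route is available: fix any $v\in K\setminus\{0\}$; since $L$ is injective on $c_f$ (Corollary \ref{cinj}) and $L(K)=K$, the iterates $v_j:=L^{-j}v$ lie in $K$, and $\{v_j\}_{j\ge0}$ is linearly independent because a relation $\sum_{j=0}^Nc_jv_j=0$ yields, after applying $L^N$, the identity $P(L)v=0$ with $P(t)=\sum_jc_jt^{N-j}\neq0$, and $P(L)$ is again elliptic with real analytic coefficients, so Theorem \ref{Pet} forces $v=0$. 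Either way $K$ is $\{0\}$ or infinite-dimensional.
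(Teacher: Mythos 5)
Your argument is correct and takes a genuinely different route from the paper's. The paper picks a nonzero $g\in K$, builds the chain $f_k\in c_f$ with $L^kf_k=g$, assumes a linear dependence among the $f_k$, and then packages the recursion $Lf_{k+1}=f_k$ into an elliptic \emph{system} $(LI_{m-1}-A)f=0$ with a constant companion matrix $A$; this forces an appeal to the Morrey--Nirenberg theorem on the analyticity of solutions of elliptic systems with real analytic coefficients, a result not otherwise used in the paper. Your first variant avoids systems altogether: $K$ being $L$-invariant, a nonzero finite-dimensional $K$ would give $L|_K$ an eigenvalue $\lambda\in\C$, and the eigenvector satisfies the single scalar elliptic equation $(L-\lambda)v=0$, so the scalar Theorem \ref{Pet} already quoted in the paper gives $v$ analytic and hence, being flat, zero. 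Your second variant keeps a chain $v_j=L^{-j}v$ but collapses a hypothetical relation $\sum_j c_jv_j=0$ into the scalar equation $P(L)v=0$ for a nonzero polynomial $P$; since the principal symbol of $P(L)$ is a nonzero constant times a power of that of $L$ (and $\deg P=0$ is handled directly), $P(L)$ is again elliptic with real analytic coefficients and Theorem \ref{Pet} finishes. In both variants the only analytic input is the scalar analytic hypoellipticity, which is a cleaner dependence structure than the paper's. The one step left tacit in the second variant, namely $L(K)=K$, does hold and deserves a line: for $u\in K$ and any $k$, writing $u=Lw=L^{k+1}f$ with $w,f\in c_f$ and using the injectivity of $L$ on $c_f$ (Corollary \ref{cinj}) gives $w=L^kf\in L^kc_f$, so $w\in K$.
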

 \begin{proof}
 Assume that there exists a non-zero element $g\in K$. Then for every positive integer $k$ there exists an element $f_k\in c_f$ such that $L^kf_k=g$. It is easy to see that $f_k\in K$ for all $k$. Since $L\colon c_f\rightarrow c_f$ is injective (Corollary \ref{cinj}), then $f_k\not\equiv 0$ for all $k$. We now claim that for each $m>0$ the set $\{f_1,\dots,f_m\}$ is linearly independent. By contradiction, assume that $$f_1=c_2f_2+\dots+c_mf_m,\quad c_j\in\C.$$
 Then, recalling that $Lf_{k+1}=f_k$ for each $k$, we have 
 
 \begin{equation}\label{matrix}
 \begin{bmatrix}
L & 0 & \dots & 0\\
0 & L & \dots & 0\\
0 & . & . & 0\\
.&.&.&.\\
0&\dots&0&L\\
\end{bmatrix} \begin{bmatrix}
f_2\\
f_3 \\
. \\
.\\
f_m\\
\end{bmatrix}=\begin{bmatrix}
c_2 & c_3 & \dots & c_m\\
1 & 0 & \dots & 0\\
0 & 1 & \dots & 0\\
.&.&.&.\\
0&\dots&1&0\\
\end{bmatrix}\begin{bmatrix}
f_2\\
f_3 \\
. \\
.\\
f_m\\
\end{bmatrix}.
\end{equation}

Letting $A$ be the matrix of constants that appears on the right side of \eqref{matrix} and $I_{m-1}$ the identity matrix of size $m-1$, we have that the vector $f:=(f_2,f_3,\dots,f_m)$ satisfies the elliptic system $(L I_{m-1}-A)f=0$. Recall that elliptic systems with real analytic coefficients have analytic solutions for analytic data \cite{MN57}. Hence the functions $f_k$ are analytic at $0$. Since $f_k\in c_f$ for all $k$, that is, all the $f_k$ are flat at $0$, then $f_2=f_3=\dots= f_m=0$, which is a contradiction.
 \end{proof}
\begin{remark}
 It would be interesting to find an example of an operator $L$ for which the space $K$ is infinite dimensional.
 \end{remark}
 \subsection{Solutions with compact support}
 We now present an application of the ideas introduced in Section \ref{Sec2} to the study of compactly supported solutions of elliptic operators with real analytic coefficients. Recall that the support of a function $f$, which we denote as $\supp(f)$, is the closure of the set where $f$ is non-zero. We write $\partial\, \supp (f)$ for the boundary of the support. The connection with flat functions is made clear in the next simple lemma.
\begin{lemma}\label{simple}
If $f\in C^{\infty}(\R^n)$ has compact support and $x_0$ is a point of the boundary of the support of $f$, then $f$ is flat at $x_0$. 
\end{lemma}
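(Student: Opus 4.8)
The plan is to use only the definition of the support together with the continuity of all derivatives $D^{\alpha}f$. Recall that $\supp(f)$ is, by definition, the closure of $\{x\in\R^n:f(x)\neq 0\}$; in particular it is a closed set, and its complement $\Omega':=\R^n\setminus\supp(f)$ is the largest open set on which $f$ vanishes identically. Consequently $D^{\alpha}f\equiv 0$ on $\Omega'$ for every multi-index $\alpha\in\mathbb{N}^n$.

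Now let $x_0\in\partial\,\supp(f)$. Since $\supp(f)$ is closed we have $x_0\in\supp(f)$, and since $x_0$ lies in the boundary it is also in the closure of $\Omega'$. Hence there is a sequence $(x_j)_{j\in\mathbb{N}}\subset\Omega'$ with $x_j\to x_0$. First I would record that for each $j$ and each multi-index $\alpha$ we have $D^{\alpha}f(x_j)=0$, because $x_j\in\Omega'$. Then, invoking the continuity of $D^{\alpha}f$ (which holds as $f\in C^{\infty}(\R^n)$), I would pass to the limit:
\[
D^{\alpha}f(x_0)=\lim_{j\to\infty}D^{\alpha}f(x_j)=0.
\]
Since $\alpha$ was arbitrary, the full jet of $f$ at $x_0$ vanishes, i.e.\ $f$ is flat at $x_0$.

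There is essentially no obstacle here: the only point requiring a little care is the correct reading of the definition of $\supp(f)$, namely that $f$ (and therefore each $D^{\alpha}f$) vanishes not merely at points where $f=0$ but on the whole open set $\Omega'$, which is what makes the limiting argument work. I would also note in passing that the compactness of $\supp(f)$ is not actually needed for this statement; it is included because the lemma will be applied in the setting of compactly supported solutions, where $\partial\,\supp(f)$ is automatically nonempty as soon as $f\not\equiv 0$.
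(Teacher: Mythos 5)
Your proof is correct and follows essentially the same route as the paper's: pick a sequence $x_j\to x_0$ lying in the open complement of $\supp(f)$, note that every $D^\alpha f$ vanishes there, and pass to the limit by continuity. Your closing observation that compactness of $\supp(f)$ is not needed for this particular lemma is also accurate.
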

\begin{proof}
Consider a sequence of points $x_j\in\mathbb{R}^n\setminus \supp(u)$ such that $x_j\rightarrow x_0$ and observe, for every multi-index $\alpha$, that $D^\alpha f(x_0)= \lim_{x_j\to x_0} D^\alpha f(x_j)=0$. 
\end{proof}
 

\begin{proposition}\label{supporto}
Let $L$ be an elliptic differential operator with real analytic coefficients defined on an open set $\Omega\subset\R^n$. Let $f$ be a smooth compactly supported function on $\Omega$ and $u$ a smooth compactly supported solution to $Lu=f$. Then 
\begin{equation}\label{relationonsupport}
\partial\,\supp (u)\subseteq \supp (f) \subseteq \supp (u).
\end{equation}
\end{proposition}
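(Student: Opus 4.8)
The plan is to treat the two inclusions in \eqref{relationonsupport} separately. The inclusion $\supp(f)\subseteq\supp(u)$ is a soft consequence of the locality of differential operators, while $\partial\,\supp(u)\subseteq\supp(f)$ is where the analytic hypoellipticity of $L$ (Theorem \ref{Pet}) together with Lemma \ref{simple} comes in.

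For $\supp(f)\subseteq\supp(u)$: take any $x_0\notin\supp(u)$. Then $u$ vanishes on an open neighborhood $V$ of $x_0$, and since $L$ is a differential operator it is local, so $f=Lu$ vanishes on $V$ as well; hence $x_0\notin\supp(f)$. Taking contrapositives gives the inclusion. This step requires nothing beyond the definition of $L$.

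For $\partial\,\supp(u)\subseteq\supp(f)$ I would argue by contradiction. Suppose $x_0\in\partial\,\supp(u)$ but $x_0\notin\supp(f)$. Then there is an open ball $B$ centered at $x_0$ on which $f\equiv 0$, i.e.\ $Lu=0$ in $B$. Since $u$ is smooth it defines a distribution on $B$ and the right-hand side $0$ is real analytic, so Theorem \ref{Pet} yields $u\in C^{\omega}(B)$. On the other hand, $u$ has compact support and $x_0$ lies on the boundary of that support, so Lemma \ref{simple} shows that $u$ is flat at $x_0$. It remains to combine these two facts through the elementary observation that a real analytic function on a connected open set which is flat at a single point must vanish identically: the set of points of $B$ at which all derivatives of $u$ vanish is closed by continuity of the derivatives, open because on any polydisc of convergence $u$ agrees with its (identically vanishing) Taylor expansion, and nonempty since it contains $x_0$; as $B$ is connected it is all of $B$. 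Thus $u\equiv 0$ on $B$, which contradicts $x_0\in\partial\,\supp(u)\subseteq\supp(u)$.

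I do not expect a genuine obstacle here; the argument is a direct packaging of Theorem \ref{Pet} and Lemma \ref{simple}. The only point deserving an explicit sentence rather than an appeal to folklore is the ``real analytic plus infinitely flat at a point implies locally zero'' step, which I would justify by the connectedness argument above, choosing $B$ at the outset to be a ball so that connectedness is automatic.
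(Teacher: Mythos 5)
Your argument is correct and is essentially the paper's proof. The first inclusion is handled the same way (locality of $L$), and for $\partial\,\supp(u)\subseteq\supp(f)$ both arguments combine Lemma \ref{simple} with Theorem \ref{Pet}; the paper phrases the contradiction as ``$u$ flat but not locally zero, hence not analytic, yet $Lu=0$ forces analyticity,'' which is the same ``analytic $+$ flat $\Rightarrow$ locally zero'' fact you spell out via connectedness of the ball $B$, merely packaged in the contrapositive.
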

\begin{proof}
The inclusion $\supp (f) \subseteq \supp (u)$ is clear. For the inclusion $\partial\, \supp (u)\subseteq \supp (f)$, consider a point $x_0\in \partial\, \supp (u)$. Since $u$ is flat at $x_0$ (Lemma \ref{simple}) but not identically zero in any neighborhood of $x_0$, then $u$ is not analytic at $x_0$. Assume by contradiction that $x_0\notin \supp (f)$. Then $f\equiv 0$ in a neighborhood of $x_0$. Hence $u$ satisfies $Lu=0$ in the sense of germs at $x_0$. By Theorem \ref{Pet}, $u$ is analytic at $x_0$, which is a contradiction. This concludes the proof of \eqref{relationonsupport}. 
\end{proof}

\begin{remark}The topological relations described in Proposition \ref{supporto} between the support of the initial datum and the support of the solution are well known in the case of the Cauchy-Riemann operator $\bar\partial$ (see for example \cite[Proposition 1.1]{LTS}). 
\end{remark}
\begin{remark} For $n=2$, if $\supp(f)$ is simply connected, then \eqref{relationonsupport} gives $\supp(u)=\supp(f)$. The operator $\bar\partial$ on $\R^2\simeq\C$ shows that in general $\supp (u)\neq \supp (f)$ if $\supp(f)$ is not simply connected. Consider the following example. Let $\varphi\colon\R\rightarrow\R$ be a smooth non-negative  function such that $\supp(\varphi)\subset[1,2]$. Let $f\colon\C\rightarrow\C$ be defined as $f(z)=z\varphi(|z|^2)$. Then $f$ is smooth in $\C$, $\supp(f)\subseteq \{1\leq |z|\leq 2\}$ and $\int_{\C}\zeta^{-1}f(\zeta)\, d\bar\zeta\wedge d\zeta\neq 0$. A solution $u$ for $\bar\partial u=f(z)dz$ is given by $$u(z)=-\frac{1}{2\pi i}\int_{\C}\frac{f(\zeta)}{\zeta-z}\, d\bar\zeta\wedge d\zeta.$$
With this choice of $f$ we have that $u(0)\neq 0$, and therefore $\supp(u)\neq\supp(f)$.
\end{remark}

\section{Acknowledgements}
The first author acknowledges useful conversations on the topics of this paper with Luca Baracco, Christine Laurent-Thi\'ebaut, and Bruce Reznick. He also wishes to thank Ethan Addison for his careful reading of the manuscript, and the Department of Mathematics of Purdue University - Fort Wayne for the warm hospitality and financial support.

\end{document}